\newtheorem{theorem}{Theorem}[section]
\newtheorem{proposition}[theorem]{Proposition}
\newtheorem{lemma}[theorem]{Lemma}
\theoremstyle{definition}
\newtheorem{definition}[theorem]{Definition}
\newtheorem{remark}[theorem]{Remark}
\theoremstyle{plain}
\DeclareMathAlphabet{\mathcalligra}{T1}{calligra}{m}{n}
\DeclareFontShape{T1}{calligra}{m}{n}{<->s*[2.2]callig15}{}
\newcommand{\leftexp}[2]{{\vphantom{#2}}^{#1}{#2}}
\numberwithin{equation}{subsection}
\begin{document}
\title{Finite-time degeneration of hyperbolicity without blowup for quasilinear wave equations}
\author[JS]{Jared Speck$^{* \dagger}$}

\thanks{$^{\dagger}$Speck gratefully acknowledges support from NSF grant \# DMS-1162211,
from NSF CAREER grant \# DMS-1454419,
from a Sloan Research Fellowship provided by the Alfred P. Sloan foundation,
and from a Solomon Buchsbaum grant administered by the Massachusetts Institute of Technology.
}

\thanks{$^{*}$Massachusetts Institute of Technology, Department of Mathematics, 77 Massachusetts Ave, Room 2-265, Cambridge, MA 02139-4307, USA. \texttt{jspeck@math.mit.edu}}

\begin{abstract}
In three spatial dimension, we study the Cauchy problem 
for the wave equation
$- \partial_t^2 \Psi + (1 + \Psi)^P \Delta \Psi = 0$
for $P \in \lbrace 1,2 \rbrace$.
We exhibit a form of stable Tricomi-type degeneracy formation  
that has not previously been studied 
in more than one spatial dimension.
Specifically, using only energy methods and ODE techniques,
we exhibit an open set of data such that $\Psi$ is initially near $0$
while $1 + \Psi$ vanishes in finite time.
In fact, generic data, 
when appropriately rescaled, 
lead to this phenomenon.
The solution remains regular in the following sense: 
there is a high-order $L^2$-type energy, 
featuring degenerate weights only at the top-order,
that remains bounded.
When $P=1$, we show that any $C^1$ extension of $\Psi$
to the future of a point where $1 + \Psi = 0$ 
must exit the regime of hyperbolicity.
Moreover, the Kretschmann scalar 
of the Lorentzian metric corresponding to the wave equation
blows up at those points.
Thus, our results show that curvature blowup does not always coincide 
with singularity formation in the solution variable.
Similar phenomena occur when $P=2$, where
the vanishing of $1 + \Psi$ corresponds 
to the failure of strict hyperbolicity, 
although the equation is hyperbolic at all values of $\Psi$.

The data are compactly supported and
are allowed to be large or small as measured by an
unweighted Sobolev norm. However, we assume that initially, 
the spatial derivatives of $\Psi$ are nonlinearly small
relative to $|\partial_t \Psi|$, 
which allows us to treat the equation
as a perturbation of the ODE 
$
\displaystyle
\frac{d^2}{dt^2}
\Psi = 0
$.
We show that for appropriate data,
$\partial_t \Psi$ remains quantitatively negative,
which simultaneously drives the degeneracy formation
and yields a favorable spacetime integral in the energy estimates
that is crucial for controlling some top-order error terms.
Our result complements those of Alinhac and Lindblad, who 
showed that if the data are small as measured by a Sobolev norm with radial weights,
then the solution is global.

\bigskip

\noindent \textbf{Keywords}: degenerate hyperbolic, strictly hyperbolic, Tricomi equation, weakly hyperbolic

\bigskip

\noindent \textbf{Mathematics Subject Classification (2010)} Primary: 35L80; Secondary: 35L05, 35L72
\end{abstract}

\maketitle

\centerline{\today}


\section{Introduction}
\label{S:INTRO}
Many authors have studied model nonlinear wave equations 
as a means to gain insight into more challenging 
wave-like quasilinear equations, 
such as Einstein's equations of general relativity, 
the compressible Euler equations without vorticity, 
and the equations of elasticity.
Motivated by the same considerations, 
in this paper, we study model quasilinear wave equations
in three spatial dimensions.
To simplify the presentation,
we have chosen to restrict our attention to the $3$-dimensional case only;
with only modest additional effort,
our results could be generalized to apply in any number of spatial dimensions.
In a broad sense, we are interested in finding initial conditions
without symmetry assumptions that lead to some kind of stable breakdown.
In our main results, which we summarize just below \eqref{E:SPACETIMEMETRIC}, 
we exhibit a type of stable degenerate solution behavior, \emph{distinct from blowup},
that, to the best of our knowledge, has not previously been studied 
in the context of quasilinear equations in more than one spatial dimension.
Roughly, we show that there exists an open set of data
such that certain principal coefficients in the equation
vanish in finite time \emph{without a singularity forming in the solution}.
More precisely, the vanishing of the coefficients corresponds to the vanishing of the wave speed,
which in turn is tied to other kinds of degeneracies described below.
We note that Wong \cite{wW2016} has obtained similar constructive results 
for axially symmetric timelike minimal submanifolds of Minkowski spacetime,
a setting in which the equations of motion are a system of 
(effectively one-space-dimensional)
quasilinear wave equations with principal coefficients that depend on the solution
(but not its derivatives). Specifically, he showed that \emph{all} axially symmetric solutions
(without any smallness assumption) lead to a finite-time degeneracy caused by
the vanishing of a principal coefficient in
the evolution equations. We also note that in the case of one spatial dimension, 
results similar to ours are obtained in \cites{kKyS2013,yS2013,sY2016a,sY2016b} 
using proofs by contradiction that rely on the method of Riemann invariants.
However, since the method of Riemann invariants is not applicable
in more than one spatial dimension and since we are interested in direct proofs,  
our approach here is quite different.

Through our study of model problems, we are aiming to develop approaches
that might be useful for studying the kinds of degeneracies that might develop
in solutions to more physically relevant quasilinear equations.
One consideration behind this aim is that there are relatively few 
breakdown results for quasilinear equations 
compared to the semilinear case. A second consideration is that
many of the techniques that have been used to study semilinear
wave equations do not apply in the quasilinear case;
see Subsect.\ \ref{SS:FURTHERDISCUSSIONANDMOTIVATION} for further discussion.
A third consideration concerns fundamental limitations of semilinear model equations:
they are simply incapable of exhibiting some of the most important degeneracies that
can occur in solutions to quasilinear equations. 
In particular, the degeneracy exhibited by the solutions from our main results
cannot occur in solutions to semilinear wave equations with principal part
equal to the linear wave operator\footnote{Here,
$\square_m := -\partial_t^2 + \Delta$ denotes the standard linear wave
operator corresponding to the Minkowski metric $m := \mbox{\upshape diag}(-1,1,1,1)$ on $\mathbb{R}^{1+3}$.} 
$\square_m$. As a second example of breakdown that is unique to the quasilinear case, 
we note that the phenomenon of shock formation, 
described in more detail at the end of Subsect.\ \ref{SS:OTHERDEGNERATEHYPERBOLICEQUATIONS},
cannot occur in solutions to semilinear equations since, in the semilinear case,
the evolution of characteristics is not influenced in any way\footnote{In the works on shock formation 
for quasilinear equations described in Subsect.\ \ref{SS:OTHERDEGNERATEHYPERBOLICEQUATIONS},
the intersection of the characteristics is tied to the blowup of some derivative of the solution.} 
by the solution. 

In view of the above discussion, it is significant that 
our analysis has robust features and could be extended to apply to
a large class\footnote{Of course, we can only hope to treat
wave equations whose principal spatial coefficients vanish 
when evaluated at some finite values of the solution variable;
equations such as $- \partial_t^2 \Psi + (1 + \Psi^2) \Delta \Psi = 0$
are manifestly immune to the kind of degeneracies under study here.} 
of quasilinear equations.
The robustness stems from the fact that our proofs are based 
only on energy estimates, ODE-type estimates, 
and the availability of an important monotonic spacetime integral (which we describe below)
that arises in the energy estimates.
However, rather than formulating a theorem
about a general class of equations, we prefer to keep the paper short
and to exhibit the main ideas by studying
only the model equation \eqref{E:MODELWAVE} below 
in the cases $P=1,2$.

\subsection{Statement of the equations and summary of the main results}
\label{SS:STATEMENTOFEQUATIONS}
Specifically, in the cases $P=1,2$,
we study the following model Cauchy problem on $\mathbb{R}^{1+3}$:
\begin{subequations}
\begin{align} \label{E:MODELWAVE}
	- \partial_t^2 \Psi
	+ (1 + \Psi)^P \Delta \Psi
	& = 0,
		\\
	(\Psi|_{\Sigma_0},\partial_t \Psi|_{\Sigma_0})
	& = (\mathring{\Psi},\mathring{\Psi}_0),
	\label{E:MODELDATA}
\end{align}
\end{subequations}
where $(x^0:=t,x^1,x^2,x^3)$ is a fixed set of standard rectangular coordinates on $\mathbb{R}^{1+3}$,
$\Delta := \sum_{a=1}^3 \partial_a^2$ is the standard Euclidean Laplacian on $\mathbb{R}^3$, 
and throughout,\footnote{Here we use the notation ``$\simeq$'' to mean ``diffeomorphic to.''} 
$\Sigma_t := \lbrace t \rbrace \times \mathbb{R}^3 \simeq \mathbb{R}^3$.
We sometimes denote the spatial coordinates by $\underline{x} :=(x^1,x^2,x^3)$.
Note that we can rewrite \eqref{E:MODELWAVE} as\footnote{Throughout we use Einstein's summation convention.} 
$
(g^{-1})^{\alpha \beta}(\Psi)\partial_{\alpha} \partial_{\beta} \Psi = 0
$,
where $g$ is the Lorentzian (for $\Psi > - 1$) metric
\begin{align} \label{E:SPACETIMEMETRIC}
	g
	:= - dt^2 + (1 + \Psi)^{-P} \sum_{a=1}^3 (dx^a)^2.
\end{align}
This geometric perspective will be useful at various points in our discussion.

We now summarize our results; see Theorem~\ref{T:STABLEFINITETIMEBREAKDOWN}
and Prop.\ \ref{P:BLOWUPOFKRETSCHMANN} for precise statements.

\begin{changemargin}{.25in}{.25in}
	\textbf{Summary of the main results.}
	In the case $P=1$,
	there exists an open subset 
	of $H^6(\mathbb{R}^3) \times H^5(\mathbb{R}^3)$
	comprising compactly supported initial data 
	$(\mathring{\Psi},\mathring{\Psi}_0)$
	such that the solution $\Psi$, its \emph{spatial} derivatives,
	and its \emph{mixed space-time} derivatives
	initially satisfy a nonlinear smallness
	condition compared to\footnote{Throughout, $[p]- := |\min \lbrace p,0 \rbrace|$.} 
	$\max_{\Sigma_0}[\mathring{\Psi}_0]_-$
	and 
	$
	\displaystyle
	\frac{1}{\| \mathring{\Psi}_0 \|_{L^{\infty}(\Sigma_0)}}
	$, 
	and such that the solution has the following property: 
	the coefficient $1 + \Psi$ 
	in \eqref{E:MODELWAVE} vanishes at some time 
	$T_{\star} \in (0, \infty)$.
	In fact, the finite-time vanishing of $1 + \Psi$ \emph{always} occurs 
	if $\mathring{\Psi}_0$ is non-trivial
	and the data 
	are appropriately rescaled;
	see Remark~\ref{R:GENERICDEGENERACY}.
	Moreover, 
	\[
	\Psi
		\in 
			C\left([0,T_{\star}),H^6(\mathbb{R}^3) \right)
			\cap
			L^2\left([0,T_{\star}],H^6(\mathbb{R}^3) \right)
			\cap
			C\left([0,T_{\star}],H^5(\mathbb{R}^3) \right)
	\]
	while for any $N < 5$,
	\[
	\partial_t \Psi
	\in 
	C\left([0,T_{\star}),H^5(\mathbb{R}^3) \right)
	\cap
	L^{\infty}\left([0,T_{\star}],H^5(\mathbb{R}^3) \right)
	C\left([0,T_{\star}],H^N(\mathbb{R}^3) \right).
	\]
	In addition, the Kretschmann scalar
	$Riem(g)^{\alpha \beta \gamma \delta} Riem(g)_{\alpha \beta \gamma \delta}$
	blows up precisely at points where $1 + \Psi$ vanishes,
	where $Riem(g)$ denotes the Riemann curvature of $g$.
	Finally, the solution exits the regime of hyperbolicity
	at time $T_{\star}$ and thus it cannot be continued beyond $T_{\star}$ as a classical solution to 
	a hyperbolic equation.
	In the case $P=2$,
	similar results hold,
	the main differences being that $\Psi$ is not necessarily an element of
	$L^2\left([0,T_{\star}],H^6(\mathbb{R}^3) \right)$
	and that the strict hyperbolicity\footnote{Equation \eqref{E:MODELWAVE}
	is said to be strictly hyperbolic in the direction $\upomega$ if the symbol 
	$p(\upxi) := -\upxi_0^2 + (1 + \Psi)^P \sum_{a=1}^3 \upxi_a^2$ 
	has the following property: for any one-form $\upxi \neq 0$, 
	the polynomial $s \rightarrow p(\upxi + s \upomega)$
	has two distinct real roots. It is straightforward to see that equation \eqref{E:MODELWAVE}
	is strictly hyperbolic in the direction $\upomega := (1,0,0,0)$ if $1 + \Psi > 0$, 
	and that it is \emph{not} strictly hyperbolic in any direction if $1 + \Psi = 0$.
	\label{FN:STRICTLYHYPERBOLIC}} 
	breaks down when $1 + \Psi$ vanishes
	but hyperbolicity\footnote{Here, by hyperbolic (in the direction $\upomega$), 
	we mean that for all one-forms $\upxi \neq 0$,
	the polynomial $s \rightarrow p(\upxi + s \upomega)$
	from Footnote~\ref{FN:STRICTLYHYPERBOLIC}
	has only real roots. Such polynomials are known as \emph{hyperbolic polynomials}. \label{FN:HYPERBOLIC}} 
	does not.\footnote{In the literature, 
	equations exhibiting this kind of degeneracy
	are often referred to as \emph{weakly hyperbolic}.}  
	This leaves open, in the case $P=2$, the possibility of classically
	extending the solution past time $T_{\star}$;
	see Subsubsect.\ \ref{SSS:EXTENDINGPASTHEDEGENERACY}.
\end{changemargin}

\subsection{Paper outline}
\label{SS:PAPEROUTLINE}
The remainder of the paper is organized as follows.
\begin{itemize}
	\item In Subsect.\ \ref{SS:PRELIMINARYDISCUSSIONOFRESULTS},
		we provide some initial remarks expanding upon various aspects of our
		results.
	\item In Subsect.\ \ref{SS:FURTHERDISCUSSIONANDMOTIVATION},
		we mention some techniques that have been used in studying
		the breakdown of solutions to semilinear equations. 
		As motivation for the present work, 
		we point out some limitations of the semilinear techniques
		for the study of quasilinear equations.
		\item In Subsect.\ \ref{SS:BRIEFOVERVIEW} we provide a brief overview of the proof
		of our main results.
	\item In Subsect.\ \ref{SS:OTHERDEGNERATEHYPERBOLICEQUATIONS},
		we describe some connections between our results and prior work
		on degenerate hyperbolic PDEs.
	\item In Subsect.\ \ref{SS:NOTATION} we summarize our notation.
\item In Sect.\ \ref{S:DATAANDBOOTSTRAP}, we state our assumptions on the initial data
	and introduce bootstrap assumptions.
	\item In Sect.\ \ref{S:APRIORIESTIMATES}, we use the bootstrap and data-size assumptions
		of Sect.\ \ref{S:DATAANDBOOTSTRAP} to derive a priori pointwise estimates and energy estimates.
		From the energy estimates, we deduce improvements of the bootstrap assumptions.
	\item In Sect.\ \ref{S:MAINRESULTS}, we use the 
	 estimates of Sect.\ \ref{S:APRIORIESTIMATES} to prove our main results.
\end{itemize}

\subsection{Initial remarks on the main results}
\label{SS:PRELIMINARYDISCUSSIONOFRESULTS}
As far as we know, there are no prior results
in the spirit of our main results in more than one spatial dimension.
There are, however, examples in which the Cauchy problem 
for a quasilinear wave equation has been solved
(for suitable data without symmetry assumptions)
and such that it was shown that some derivative of the solution
blows up in finite time while the solution itself remains bounded.
One class of such examples comprises
shock formation results, which we describe in more detail at the end of Subsect.\ \ref{SS:OTHERDEGNERATEHYPERBOLICEQUATIONS}.
A second example is Luk's work \cite{jL2013} on the formation of weak null singularities
in a family of solutions to the Einstein-vacuum equations.
Specifically, he exhibited a stable family of solutions
such that the Christoffel symbols (which are, roughly speaking, the first derivatives of the solution) 
blow up along a null boundary while the metric (that is, the solution itself) 
extends continuously past the null boundary. 
We stress that the degeneracy we have exhibited in our main results 
is much less severe than in the above results;
there is no blowup in our solutions, except possibly
at the top derivative level,
due to the degeneracy of the weights in the energy~\eqref{E:INTROENERGYDEF}.

We also point out a connection between our work here and
our joint works \cites{iRjS2014a,iRjS2014b} with Rodnianski, 
in which we proved stable blowup results (without symmetry assumptions) for 
solutions to the linearized and nonlinear Einstein-scalar field and Einstein-stiff fluid systems.
In the nonlinear problem, the wave speed became, 
relative to a geometrically defined coordinate system,\footnote{Specifically, the $\Sigma_t$
have constant mean curvature and the spatial coordinates are transported along the unit normal
to $\Sigma_t$.}  
\emph{infinite} at the singularity.
Although the infinite
wave speed is in the opposite direction of the degeneracy exhibited by our
main results (in which the wave speed vanishes\footnote{Note that the effective 
wave speed for equation \eqref{E:MODELWAVE} is $(1 + \Psi)^{P/2}$.}), 
the analysis in \cites{iRjS2014a,iRjS2014b} shares a key feature with that of the present work:
the solution regime studied is such that the time derivatives dominate 
the evolution. That is, the spatial derivatives remain negligible, all the way up 
to the degeneracy; see Subsect.\ \ref{SS:BRIEFOVERVIEW} for further discussion
regarding this issue for the solutions under study here. 
Hence, those works and the present
work all exhibit the stability of ODE-type behavior in some solutions
to wave equations.

\subsubsection{Remarks on small data}
\label{SSS:SMALLDATAGLOBALEXISTENCE}
The methods of Alinhac \cite{sA2003} and Lindblad \cite{hL2008}
yield that small-data solutions to equation \eqref{E:MODELWAVE}
exist globally,\footnote{Since the equations do not satisfy the null condition,
the asymptotics of the solution can be distorted compared to the case of solutions
to the linear wave equation.} 
where the size of the data is measured by
Sobolev norms with \emph{radial weights}.
Consequently, if
$(\mathring{\Psi},\mathring{\Psi}_0)$
are compactly supported data 
to which our main results apply, then
for $\uplambda$ sufficiently large,
the solution corresponding to the data
$(\uplambda^{-1} \mathring{\Psi},\uplambda^{-1} \mathring{\Psi}_0)$
is global. On the other hand, our main results apply 
to data that are allowed to be small in certain unweighted norms,
as long as the spatial derivatives are ``very small.''
How can we reconcile these two competing statements?
The answer is that our data assumptions are nonlinear in nature and
are \emph{not} invariant under the rescaling
$
(\mathring{\Psi},\mathring{\Psi}_0)
\rightarrow
(\uplambda^{-1} \mathring{\Psi},\uplambda^{-1} \mathring{\Psi}_0)
$
if $\uplambda$ is too large.
We can sketch the situation as follows 
(see Subsect.\ \ref{SS:SMALLNESSASSUMPTIONS} for the precise nonlinear smallness assumptions that we use to close our proof):
if $\epsilon$ is the size of $\nabla \Psi$ at time $0$
(where $\nabla$ denotes the spatial coordinate gradient)
and $\delta$ is the size of $\partial_t \Psi$ at time $0$,
then, roughly speaking, some parts 
of our proof rely on\footnote{For example, 
a careful analysis of the proof of inequality \eqref{E:MAINAPRIORIENERGYESTIMATE}
yields that the constant ``$C$'' in front of the $\mathring{\upepsilon}^2$
term on the right-hand side depends on 
$\exp \left(\mathring{\updelta}_*^{-1} \right)$,
where $\mathring{\updelta}_*$
is defined in \eqref{E:CRUCIALDATASIZEPARAMETER}.
See Subsect.\ ~\ref{SS:CONVENTIONSFORCONSTANTS}
for our conventions regarding the dependence of constants on various parameters.} 
the assumption that
$\epsilon \exp \left(C \delta^{-1} \right) \lesssim 1$. 
The point is that if $\uplambda$ is too large, 
then the assumption is not satisfied, the reason being that
$\epsilon$ and $\delta$ both scale like $\uplambda^{-1}$.
One can contrast this against the discussion in Subsect.\ \ref{SS:EXISTENCEOFDATA},
where we note that a different scaling of the data
always leads to our nonlinear smallness assumptions being satisfied.

\subsubsection{Remarks on extending the solution past the degeneracy}
\label{SSS:EXTENDINGPASTHEDEGENERACY}
	It is of interest to know if and when the solutions 
	provided by our main results can be extended, 
	as solutions with some kind of Sobolev regularity,\footnote{The Cauchy--Kovalevskaya theorem 
	could be used to prove an
	(admittedly unsatisfying) result
	showing that in the cases $P=1,2$, one can extend \emph{analytic} solutions to
	equation \eqref{E:MODELWAVE} to exist in a spacetime neighborhood
	of a point at which $1 + \Psi$ vanishes.
	Note that this shows that the blowup of the curvature of the metric of equation \eqref{E:MODELWAVE}
	that occurs when $1 + \Psi = 0$ 
	is not always an obstacle to continuing the solution
	classically.} 
	past the time of first vanishing of $1 + \Psi$.
	Although we do not address this question in this article,
	in this subsubsection, we describe what is known and some of the difficulties
	that one would encounter in attempting to answer it.
	The cases $P=1$ and $P=2$ in equation \eqref{E:MODELWAVE} 
	correspond to different phenomena and hence we will
	discuss them separately, starting with the case $P=1$.
	
	Interesting results have recently been obtained in \cite{nLtNbT2015} for equations related to \eqref{E:MODELWAVE}.
	They \emph{suggest} that in the case $P=1$, it might not be possible to continue the solutions from our main results
	as Sobolev-class solutions in a spacetime neighborhood of a point at which $1 + \Psi$ vanishes. 
	Perhaps this is not surprising since, for the solutions under study, 
	the case $P=1$ corresponds to equation \eqref{E:MODELWAVE} 
	changing from hyperbolic to elliptic type past the degeneracy
	(at least for $C^1$ solutions).
	Specifically, those authors proved a type of Hadamard ill-posedness
	for certain initial data for
	a class of quasilinear first-order systems in $n$ spatial dimensions of the form
	\begin{align} \label{E:QUASILINEARFIRSTORDERSYSTEM}
		\partial_t u
		+ 
		\sum_{a=1}^n A^a(t,x,u) \partial_a u
		& = F(t,x,u),
	\end{align}
	where $(t,x) \in \mathbb{R}^{1+n}$,
	$u$ is a map from $\mathbb{R}^{1+n}$ to $\mathbb{R}^N$
	with $n$ and $N$ arbitrary,
	and the $A^a$ are real $N \times N$ matrices.
	The authors proved several types of results in 
	\cite{nLtNbT2015}, but here we describe only the ones
	that are most relevant for equation \eqref{E:MODELWAVE}. Roughly, in Theorem~1.3 of that paper, 
	for systems of type \eqref{E:QUASILINEARFIRSTORDERSYSTEM} that satisfy some technical conditions, 
	the authors studied perturbations of a background solution, denoted by $\phi = \phi(t,x)$,
	with the following property: the system \eqref{E:QUASILINEARFIRSTORDERSYSTEM} is
	hyperbolic
	when evaluated at $(t,x,u) = (0,x,\mathring{\phi}(x))$, where $\mathring{\phi}(x) := \phi(0,x)$,
	but necessarily becomes elliptic at $(t,x,u) = (t,x,\phi(t,x))$
	at any $t > 0$ due to the branching\footnote{In \cite{nLtNbT2015}, the definition of hyperbolicity is
	that the polynomial (in $\uplambda$) $p := \mbox{\upshape det} \left( \uplambda I - \sum_{a=1}^n \xi_a A^a(t,x,u) \right)$
	should have only real roots, which are eigenvalues of $\sum_{a=1}^n \xi_a A^a(t,x,u)$.
	Moreover, branching roughly means that the eigenvalues are real at $t=0$ but 
	can have non-zero imaginary parts at arbitrarily small values of $t > 0$.} 
	of the eigenvalues of the principal symbol. 
	The assumptions of \cite{nLtNbT2015}*{Theorem~1.3} guarantee that the branching
	is stable under small perturbations. 
	Roughly, for the solutions to \eqref{E:MODELWAVE} from our main results, 
	a similar transition to ellipticity 
	would occur in the case $P=1$ if one were able to classically extend the solution\footnote{As we will explain,
	the solutions from our main results are such that $\Psi$ is strictly decreasing in time at points where $1 + \Psi$ vanishes.} 
	past the time of first vanishing of $1 + \Psi$.
	We now summarize the main aspects of \cite{nLtNbT2015}*{Theorem~1.3}.
	We will use the notation $\mathring{u}$ to denote initial data for the system \eqref{E:QUASILINEARFIRSTORDERSYSTEM}
	and $u$ to denote the corresponding solution (if it exists).
	The theorem is, roughly, as follows: 
	for any $m \in \mathbb{R}$ and $\upalpha \in (1/2,1]$, 
	and any sufficiently small $T > 0$,
	there is no $H^m$-neighborhood
	$\mathcal{U}$ of $\mathring{\phi}$ 
	whose elements launch corresponding solutions
	obeying a bound roughly\footnote{The precise results of \cite{nLtNbT2015}*{Theorem~1.3}
	are localized in space, but here we omit those details for brevity.}
	of the type\footnote{If $f = f(t,x)$, then
	$\| f \|_{W_x^{1,\infty} L_t^{\infty}([0,T])} := \mbox{ess sup}_{t \in [0,T]} \| f(t,\cdot) \|_{W^{1,\infty}}$. \label{FN:MIXEDSOBOLEVNORM}}
	\[
	\displaystyle
	\sup_{\mathring{u} \in \mathcal{U}}
	\frac{\| u - \phi \|_{W_x^{1,\infty} L_t^{\infty}([0,T])}}{\| \mathring{u} - \mathring{\phi} \|_{H^m}^{\upalpha}}
	<
	\infty.
	\]
	Put differently, there exist 
	data arbitrarily close to $\mathring{\phi}$ (as measured by a Sobolev norm of arbitrarily high order)
	such that either the solution does not exist or 
	such that its deviation from $\phi$ becomes arbitrarily large in the low-order norm $\| \cdot \|_{W_x^{1,\infty}}$
	in an arbitrarily short amount of time.
	It would be interesting to determine whether or not a similar result holds for initial data close 
	to that of the data induced by 
	the solutions to \eqref{E:MODELWAVE} from our main results
	at the time of first vanishing of $1 + \Psi$.

	We now discuss the case $P=2$.
	We are not aware of any results for Sobolev-class solutions 
	to quasilinear equations
	that are relevant for extending
	solutions to \eqref{E:MODELWAVE} 
	to exist in a spacetime neighborhood of a point at which $1 + \Psi$ vanishes.
	As we will explain, the main technical difficulty that one encounters is that the solution
	might lose regularity past the degeneracy.
	In the case $P=2$, 
	even though the strict hyperbolicity (see Footnote~\ref{FN:STRICTLYHYPERBOLIC})
	of equation \eqref{E:MODELWAVE} breaks down
	when $1 + \Psi$ vanishes (corresponding to a wave of zero speed),
	the hyperbolicity (see Footnote~\ref{FN:HYPERBOLIC}) of the equation
	nonetheless persists for \emph{all} values of $\Psi$.
	The degeneracy is therefore less severe compared to the case $P=1$ and thus in principle, when $P=2$,
	the Sobolev-class solutions from our main results might be extendable, 
	as a Sobolev-class solution, 
	to a neighborhood of the points where $1 + \Psi$ first vanishes.
	As we alluded to above, 
	the lack of results in this direction might be tied to the following key difficulty: 
	the best energy estimates available for 
	degenerate\footnote{By degenerate, we mean that the wave equation is allowed
	to violate strict hyperbolicity at one or more points.} 
	linear hyperbolic wave equations exhibit a loss of derivatives.
	By this, we roughly mean that 
	the estimates for solutions $\Psi$ to the linear equation 
	are of the form
	$\| \Psi \|_{H^N(\Sigma_t)} \lesssim \| \mathring{\Psi} \|_{H^{N+d}(\Sigma_0)} + \| \mathring{\Psi}_0 \|_{H^{N+d}(\Sigma_0)}$,
	where the loss of derivatives $d$ (relative to the data) depends in a complicated way 
	on the details of the degeneration of the coefficients in the equation;
	see Subsect.\ \ref{SS:OTHERDEGNERATEHYPERBOLICEQUATIONS} for further discussion.
	As is described in \cite{mD1999}, in some cases, the loss of derivatives in the estimates is known to be saturated.
	Since proofs of well-posedness for nonlinear equations typically rely
	on estimates for linearized equations,
	any derivative loss would pose a serious obstacle 
	to extending (in the case $P=2$) the solution of equation \eqref{E:MODELWAVE} 
	as a Sobolev-class solution in a spacetime neighborhood of points at which $1 + \Psi$ vanishes. 
	At the very least, one would need to rely on a method capable of handling a finite 
	loss of derivatives in solutions to quasilinear equations.
	As is well-known \cite{rH1982}, in some cases, 
	it is sometimes possible to handle a finite loss of derivatives
	using the Nash--Moser framework.
	
	Despite the lack of results concerning extending the solution to \eqref{E:MODELWAVE} 
	as a Sobolev-class solution past
	points at which $1 + \Psi$ vanishes,
	there are constructive results in the class $C^{\infty}$. Specifically,
	in one spatial dimension, 
	Manfrin obtained \cite{rM1996} well-posedness results 
	that, for $C^{\infty}$ initial data, 
	allow one to locally continue the solution
	to equation \eqref{E:MODELWAVE} in the case $P=2$
	to a $C^{\infty}$ solution that exists in 
	a spacetime neighborhood of a point at which $1 + \Psi$ vanishes; see Subsect.\ \ref{SS:OTHERDEGNERATEHYPERBOLICEQUATIONS} 
	for further discussion. Manfrin also derived similar results in more than one spatial dimension in \cite{rM1999},
	again treating the case of $C^{\infty}$ data/solutions.
	We are also aware of a few results \cites{mD1999,qHjxHcsL2003} for quasilinear equations
	in more than one spatial dimension
	in which the authors proved local well-posedness in Sobolev spaces
	for equations featuring a degeneracy related to -- but distinct from -- the one under study here.
	However, the degeneracy in those works was created by a ``prescribed semilinear factor'' 
	rather than a quasilinear-type solution-dependent factor. 
	For this reason, it is not clear that the techniques used in those works are of relevance
	for trying to extend solutions to \eqref{E:MODELWAVE} 
	beyond points where $1 + \Psi$ vanishes;
	see the paragraph below equation \eqref{E:HANSEMILINEARWAVE} for further discussion.
	
	To close this subsubsection, 
	we note that there are various well-posedness results 
	\cites{pdAsS1992,yE1985,yElMmM1986} 
	for degenerate wave equations of
	Kirchhoff type. An example of an equation of this type is
	\begin{align} \label{E:KIRCHHOFFEQUATION}
		- \partial_t^2 \Psi
		+
		F\left(\int_{\Omega} |\nabla \Psi|^2 \, dx \right)
		\Delta \Psi
		& = 0,
	\end{align}
	where $\Omega$ is a bounded open set in $\mathbb{R}^n$ and
	$F = F(s) \geq 0$ satisfies various technical conditions
	(with $F=0$ corresponding to the degeneracy).
	However, it remains open whether or not the techniques
	used in studying Kirchhoff-type equations are of relevance
	for proving local well-posedness for equation
	\eqref{E:MODELWAVE} (in the case $P=2$)
	in regions where $1 + \Psi$ is allowed to vanish.

\subsection{Remarks on methods used for studying blowup in solutions to semilinear wave equations}
\label{SS:FURTHERDISCUSSIONANDMOTIVATION}
Although we are not aware of any other results in the spirit of the present work,
there are many results exhibiting the most well-known type of degeneracy that can occur in solutions 
to wave equations in three spatial dimensions: the finite-time \emph{blowup} of initially smooth solutions.
Our main goal in this subsection is to recall some of the most important of these results
but, at the same time, to describe some limitations of the proof techniques
for the study of more general equations. 
We will focus only on constructive\footnote{Constructive proofs of blowup stand, of course, in contrast to proofs of breakdown by contradiction. There are many examples in the literature of proofs of blowup by contradiction for wave or wave-like equations.
Two of the most important ones are
Sideris' blowup result \cite{tS1985}  
(proved by virial identity arguments)
for the compressible Euler equations under a polytropic equation of state
and John's proof \cite{fJ1981} of breakdown for several
classes of semilinear and quasilinear wave equations in three
spatial dimensions.
See also Levine's influential work \cite{hL1974}, in which he proved 
a non-constructive blowup result for semilinear wave equations on an abstract Hilbert space.}
results, by which we mean that the proofs provide a detailed description 
of the degeneracy formation and the mechanisms driving it, as in the present work.
Constructive results, especially those proved via robust techniques,
are clearly desirable if one aims to understand
the mechanisms of breakdown in solutions to realistic physical and geometric systems.
They are also important if one aims to continue the solution past the breakdown,
as is sometimes possible if it is not too severe;
see, for example, \cite{dCaL2016} for a recent result in spherical symmetry 
concerning weakly locally extending solutions to the relativistic Euler equations
past the first shock singularity.
Importantly, we will confine our discussion to prior results for semilinear equations since,
as we mentioned earlier, aside from the shock formation results described at the end of Subsect.\ \ref{SS:OTHERDEGNERATEHYPERBOLICEQUATIONS},
most constructive breakdown results for wave equations in three spatial dimensions
are blowup results for semilinear equations. 

Specifically, most constructive breakdown results for wave equations in three spatial dimensions
are blowup results for semilinear equations (or systems) of the form
$\square_m \Psi = f(\Psi,\partial \Psi)$,
where 
$f$ is a smooth nonlinear term.
Many important\footnote{Arguably, 
the most sophisticated blowup results
of this type have been proved for nonlinearities that correspond to
energy critical equations.} 
approaches have been developed to prove constructive 
blowup for such equations,
especially for scalar equations with 
$f = f(\Psi)$ given by a power law
and for systems of wave-map type;
see, for example,
\cites{ceKfM2008,rD2010,rDbS2014,rDbS2012,jKwS2014,jKwSdT2009,rDmHjKwS2014,iRjS2010,
jKwSdD2008,pRiR2012,tDcKfM2012,yMfMpRjS2014,rDjK2013}.
There are also related results that are conditional in the sense that
they do not guarantee that the solution will blow up.
Instead they characterize the possible behaviors of the solution
by providing information such as
\textbf{i)} how the singularity would form
if the solution is not global and 
\textbf{ii)} the structures 
of the data sets that lead to the various outcomes;
see, for example, \cites{lPdS1975,mS2003,kNwS2011a,kNwS2011b,kNwS2012a,kNwS2012b,jKkNwS2013,jKkNwS2013b,jKkNwS2014,rKbSmV2014,jKkNwS2015}.

Although the above results and others like them
have yielded major advancements in our understanding
of the blowup of solutions to semilinear equations, 
their proofs fundamentally rely 
on tools that are not typically applicable to quasilinear equations.
Here are some important examples, where for brevity, we are not specific about exactly which semilinear
equations have been treated with the stated technique:
\begin{itemize} 
\item
The existence of a conserved energy 
(which is not available for some important quasilinear equations, such as Einstein's equations\footnote{For asymptotically flat solutions to Einstein's equations, 
the ADM mass is conserved. However, in three spatial dimensions without symmetry assumptions, 
this quantity has thus far proven to be too weak to be of any use in controlling solutions.}).
This allows, among other things, for the application of techniques from Hamiltonian mechanics.
\item The invariance of the solutions under appropriate re-scalings 
(which is not a feature of some important quasilinear equations, such as the compressible Euler equations\footnote{In particular, 
the fluid equation of state does not generally enjoy any useful scaling transformation properties.}).
\item The availability of well-posedness results in low regularity spaces
such as the energy space
(Lindblad showed \cite{hL1998} that low regularity well-posedness
fails for a large class of quasilinear equations in three spatial dimensions).
\item The existence of a non-trivial ground state solution
(corresponding to the existence of a soliton solution)
and sharp classification results for the possible behaviors of the solution
for initial data with energy less than the ground state:
either there is finite-time blowup in both time directions 
or global existence, according to the sign of a functional
(for quasilinear equations, there is no known analog of this kind of dichotomy).
Moreover, in some cases, there are more complicated classification results available 
for solutions with energy just above the ground state.
\item A characterization of a certain norm of the ground state 
as a size threshold separating global scattering solutions from ones that can blow up
or exhibit other degenerate behavior
(again, for quasilinear equations, there is no known analog of this kind of dichotomy).
\item A characterization of the ground state as the universal blowup-profile
	under various assumptions.
\item  The availability of profile decompositions for sequences bounded in the natural
	energy space, which allows one to view the sequence as a superposition of
	linear solutions plus a small error
	(for quasilinear equations, there is no known analog of this).
\item Channel of energy type arguments showing that a portion of the solution
propagates precisely at speed one
(again, for quasilinear equations, there is no known\footnote{It is conceivable that channel of energy type results
might hold for certain quasilinear wave equations in various solution regimes,
since channel of energy type arguments seem to be somewhat stable under perturbations.} 
analog of this phenomenon).
\item The possibility of sharply characterizing the spectrum, see for example \cite{oCmHwS2012},
	of linear operators tied to the dynamics
	(which, for quasilinear equations in many solution regimes, is exceedingly difficult).
\end{itemize}

Although the above methods are impressively powerful within their domain of applicability, 
since they do not seem to apply to quasilinear equations,
we believe that it is important to develop new methods
for studying the kinds of breakdown that can occur in the quasilinear case.
It is for this reason that we have chosen
to study the model wave equations \eqref{E:MODELWAVE}.

\subsection{Brief overview of the analysis}
\label{SS:BRIEFOVERVIEW}
As we mentioned earlier, the solutions that we study are such that $\mathring{\Psi}$,
$\nabla \mathring{\Psi}_0$ (where $\nabla$ denotes the spatial coordinate gradient), 
and sufficiently many of their spatial derivatives 
are ``nonlinearly small'' (in appropriate norms)
compared to 
$[\mathring{\Psi}_0]_- 
:= 
|\min \lbrace \mathring{\Psi}_0, 0 \rbrace|
$
and
$
	\displaystyle
	\frac{1}{\| \mathring{\Psi}_0 \|_{L^{\infty}(\Sigma_0)}}
$.
A key aspect of our work is that we are able to propagate the smallness,
long enough for the coefficient $1 + \Psi$ in equation \eqref{E:MODELWAVE}
to vanish. Put differently, our main results show that under the smallness assumptions,
the solution to \eqref{E:MODELWAVE} behaves in many ways
like a solution to the second-order ODE 
$
\displaystyle
\frac{d^2}{dt^2} \Psi = 0
$. The reason that $\Psi$ vanishes for the first time is that
$\partial_t \Psi$ is sufficiently negative at one or more spatial points,
a condition that persists by the previous remarks.
To control solutions, we use the (non-conserved) energy\footnote{Throughout, $d \underline{x} := dx^1 dx^2 dx^3$ 
denotes the standard Euclidean volume form on $\Sigma_t$.}
\begin{align} \label{E:INTROENERGYDEF}
		\mathcal{E}_{[2,5]}(t)
		& :=
			\sum_{k'=2}^5
			\int_{\Sigma_t}
				|\partial_t \nabla^{k'} \Psi|^2
				+
				(1 + \Psi)^P |\nabla \nabla^{k'} \Psi|^2
				+ 
				|\nabla^{k'} \Psi|^2
		\, d \underline{x}.
	\end{align}
We avoid using low-order energies corresponding to
$k'=0,1$ in \eqref{E:INTROENERGYDEF} because 
for the solution regime under consideration
such energies would contain terms that are allowed to be large,
and we prefer to work only with small energies.
Hence, to control the low-order derivatives 
of $\Psi$, we derive ODE-type estimates
that rely in part on the energy estimates for its higher derivatives and Sobolev embedding.
Analytically, the main challenge is that the vanishing of $1 + \Psi$ leads to the degeneracy of 
the top-order spatial derivative terms in \eqref{E:INTROENERGYDEF}, 
which makes it difficult to control some top-order error integrals
in the energy estimates. 

To close the energy estimates, 
we exploit the following monotonicity, 
which is available due to our assumptions on the data:
\begin{quote}
$\partial_t \Psi$ 
is \underline{quantitatively strictly negative}
in a neighborhood of points where $1 + \Psi$ is close to $0$. 
\end{quote}
This quantitative negativity yields,
in our energy identities,
the spacetime error integral 
\begin{align} \label{E:FRICTIONINTEGRAL}
\int_{s=0}^t
\int_{\Sigma_s}
	(\partial_t \Psi) (1 + \Psi)^{P-1} |\nabla \nabla^{k'} \Psi|^2
\, d \underline{x}
\, ds,
\end{align}
which has a ``friction-type'' sign in regions where 
$1 + \Psi$ is close to $0$ but positive;
see the spacetime integral on the left-hand side of \eqref{E:MAININTEGRALINEQUALITYFORENEGY}.
It turns out that the availability of this spacetime integral 
compensates for the degeneracy of the
energy \eqref{E:INTROENERGYDEF}
and yields integrated control over the spatial derivatives up to top-order;
\textbf{this is the key to closing the proof}.

\subsection{Comparing with and contrasting against other results for degenerate hyperbolic equations}
\label{SS:OTHERDEGNERATEHYPERBOLICEQUATIONS}
For solutions such that $1 + \Psi$ is near $0$,
equation \eqref{E:MODELWAVE} can be viewed as a ``nearly degenerate'' quasilinear 
hyperbolic PDE. For this reason, the 
proofs of our main results have ties to some prior results on 
degenerate hyperbolic PDEs, which we now discuss.
In one spatial dimension,
various aspects of degenerate hyperbolic PDEs have been explored 
in the literature, such as the branching of singularities
\cites{sA1978,kAgN1981,kAgN1982,kAgN1983,kAgN1984},
uniqueness of solutions for equations
that are hyperbolic in one region but that can change type \cite{mRmR2016},
and conditions that are \emph{necessary} for well-posedness
\cite{kY1989}. However, in the rest of this subsection,
we will discuss only well-posedness results since they are 
most relevant in the context of our main results. 

In one spatial dimension, there
are many results on well-posedness, in various function spaces, 
for degenerate linear wave equations for the form
\begin{align} \label{E:MODELLINEARDEGENERATEHYPERBOLIC}
	-\partial_t^2 \Psi
	+ a(t,x) \partial_x^2 \Psi
	+ b(t,x) \partial_x \Psi
	+ c(t,x) \partial_t \Psi
	= f(t,x),
\end{align}
where $a(t,x) \geq 0$ and $a(t,x) = 0$ 
corresponds to degeneracy via a breakdown of strict hyperbolicity.
For example, if the coefficients 
$a(t,x)$,
$b(t,x)$,
and 
$c(t,x)$
are \emph{analytic} and satisfy certain technical assumptions, then
it is known \cite{tN1984}
that equation \eqref{E:MODELLINEARDEGENERATEHYPERBOLIC} 
is well-posed for $C^{\infty}$ data;
see also \cite{tN1980} for similar results.
There are also results on well-posedness for
degenerate semilinear equations.
For example, in \cite{pDpT2001},
the authors used a Nash--Moser argument
to prove $C^{\infty}$ local well-posedness
for semilinear equations of the form
\begin{align} \label{E:NASHMOSERWELLPOSEDNESS}
-\partial_t^2 \Psi
+ a(t,x) \partial_x^2 \Psi
= f(t,x,\Psi,\partial_t \Psi,\partial_x \Psi),
\end{align}
where $a(t,x) \geq 0$ is analytic and $a$ and $f$ satisfy 
appropriate technical assumptions.
We clarify that in contrast to our work here, 
in the above works, the authors 
solved the equation in a spacetime neighborhood 
of points at which the degeneracy occurs.

A serious limitation of the above results is that 
techniques relying on analyticity assumptions 
are of little use for studying 
quasilinear Cauchy problems with Sobolev-class data, 
such as the problems we consider here.
Fortunately, well-posedness results for degenerate linear equations 
that do not rely on analyticity assumptions are also known;
see, for example, \cites{oO1970,kTyT1980,pD1994,qHjHcL2006,tH2012,tHmRkY2013,qHyL2015}.
We note in particular that the results of \cites{oO1970,kTyT1980,qHjHcL2006,tH2012,tHmRkY2013,qHyL2015}
provide Sobolev estimates for the solution in terms of a Sobolev norm of the data, with a finite
loss of derivatives. We also mention the related works \cites{aA2006,aA2007}
(see also the references within), in 
which the author proves well-posedness results 
(in $C^{\infty}$ and Gevrey spaces)
for linear wave equations
with two kinds of degeneracies: \textbf{i)} the breakdown of strict hyperbolicity 
(corresponding to the vanishing of certain coefficients) 
and \textbf{ii)} the blowup of the time derivatives of certain coefficients
in the wave equation. We also mention the works
\cites{vI1975,hIkY2002}, in which the authors obtain necessary and sufficient
conditions for the Gevrey space well-posedness of degenerate linear hyperbolic equations.

Most relevant for our work here is
Manfrin's aforementioned proof \cites{rM1996}
of $C^{\infty}$ well-posedness for various degenerate \emph{quasilinear} wave equations in one spatial dimension
(see also \cite{rM1999} for a similar result in more than one spatial dimension and the related work \cite{cBrM2000}),
including those of the form
\begin{align} \label{E:MANFRINDEGENERATEHYPERBOLIC}
	-\partial_t^2 \Psi
	+ \Psi^{2k} a(t,x,\Psi) \partial_x^2 \Psi
	= f(t,x,\Psi),
\end{align}
where $k \geq 1$ is an integer and $a(t,x,\Psi)$ is uniformly bounded
from above and from below, strictly away from $0$
(and $\Psi = 0$ corresponds to the degeneracy).
More precisely, for $C^{\infty}$ initial data, Manfrin used
weighted energy estimates and Nash--Moser estimates
to prove local well-posedness for solutions to \eqref{E:MANFRINDEGENERATEHYPERBOLIC}.
The energy estimate weights are complicated to construct and are based
on dividing spacetime into various regions with the help of ``separating functions''
adapted to the degeneracy.
Note that Manfrin's results apply to our model equation\footnote{More precisely, the role of ``$1 + \Psi$'' in equation
\eqref{E:MODELWAVE} is played by ``$\Psi$'' in equation \eqref{E:MANFRINDEGENERATEHYPERBOLIC}.}
\eqref{E:MODELWAVE} in the case $P=2$.
However, it is an open problem whether or not his results can be extended to 
yield a local well-posedness result for equation \eqref{E:MANFRINDEGENERATEHYPERBOLIC} with data
in Sobolev spaces.

To further explain these results and their connection to our work here,
we consider the simple Tricomi-type equation
\begin{align} \label{E:TRICOMIMODEL}
		- \partial_t^2 \Psi
		+ 
		a(t) \Delta \Psi
		= 0,
\end{align}
where $a(t) \geq 0$. 
It is known \cite{fCsS1982} that in one spatial dimension,
the linear equation \eqref{E:TRICOMIMODEL} can be \emph{ill-posed},\footnote{In \cite{fCsS1982},
which addressed solutions in one spatial dimension,
the authors exhibited a smooth function $a(t) \geq 0$ with $a(t_0) = 0$ for some $t_0 > 0$
and data such that there is no distributional solution 
to \eqref{E:TRICOMIMODEL} with the given data that extends past time $t_0$.}
even if $a = a(t)$ is $C^{\infty}$.
Hence, it should not be taken for granted that we can 
(for suitable data) 
solve equation \eqref{E:MODELWAVE} in Sobolev spaces all the way up to the time of first vanishing of
$1 + \Psi$.
Roughly, what can go wrong in an attempt to solve the linear equation \eqref{E:TRICOMIMODEL}
is that $a(t)$ can be highly oscillatory 
near a point $t_0$ with $a(t_0) = 0$. In fact, in the example
from \cite{fCsS1982}, $a(t)$ oscillates \emph{infinitely many} 
times near $t_0$. This generates, in the basic energy identity, 
an uncontrollable term involving the ratio
$
\displaystyle
\frac{\frac{d}{dt} a(t)}{a(t)}
$ 
and leads to ill-posedness in domains $[A,B) \times \mathbb{R}$
when $t_0 \in [A,B)$.

In all of the aforementioned well-posedness results, 
the technical conditions imposed on the coefficients
rule out the infinite oscillatory behavior 
from \cite{fCsS1982} that led to ill-posedness.
To provide a more concrete example, we note that in one spatial dimension, 
Han derived \cite{qH2010}
degenerate energy estimates 
for linear wave equations of the form
\begin{align} \label{E:HANSEMILINEARWAVE}
	- \partial_t^2 \Psi
	+ a(t,x) \partial_x^2 \Psi
	+ b_0(t,x) \partial_t \Psi
	+ b(t,x) \partial_x \Psi
	+ c(t,x) \Psi
	= f(t,x),
\end{align}
where the coefficients satisfy certain technical conditions,
including, roughly speaking, that $a(t,x) \geq 0$
behaves like 
$t^m + c_{m-1}(x) t^{m-1} + \cdots + c_1(x) t + c_0(x)$.
In particular, even though $a$ is allowed to vanish at some points,
it does not exhibit highly oscillatory behavior
in the $t$ direction.
In \cite{qHjHcL2006}, similar results were derived in $n \geq 1$
spatial dimensions. 

We now describe Dreher's PhD thesis \cite{mD1999}, which involves the study of equations
that share some common features with equation \eqref{E:MODELWAVE} 
near the degeneracy $1 + \Psi = 0$.
Specifically, in his thesis, Dreher proved local well-posedness results in Sobolev spaces
for several classes
of quasilinear hyperbolic PDEs in any number of dimensions
with various kinds of space and time
degeneracies. However,
a key difference between the equations studied 
by Dreher in his thesis and our work
is that the degeneracies in \cite{mD1999} were ``prescribed''
in the sense that they were caused only by degenerate semilinear factors
that explicitly depend on the time and space variables.
That is, if one deletes the degenerate semilinear factors, then one obtains
a strictly hyperbolic PDE for which local well-posedness follows from standard techniques.
Dreher made technical assumptions on the degenerate semilinear factors
that were sufficient for proving well-posedness.
In contrast, the degeneracy caused by $1 + \Psi = 0$ in equation \eqref{E:MODELWAVE} is 
``purely quasilinear'' in nature.
The following model equation in one spatial dimension
gives a sense of the kinds of prescribed degeneracy
treated by Dreher in \cite{mD1999}:
\begin{align} \label{E:QUASILINEARWITHDEGENERATESEMILINEARWEIGHT}
	- \partial_t^2 \Psi
	+ t^2 f(\Psi) \partial_x^2 \Psi
	& = 0,
\end{align}
where $f$ is smooth and satisfies $f(\Psi) > 0.$ We stress that the absence of strict hyperbolicity 
in a neighborhood of $\Sigma_0$ is \emph{not} caused by the quasilinear factor $f(\Psi)$,
but rather by the semilinear factor $t^2$.
A related example, coming from geometry, 
is the aforementioned work \cite{qHjxHcsL2003},
in which the authors proved the existence of
local $C^k$ embeddings of surfaces 
of non-negative Gaussian curvature
into $\mathbb{R}^3$. 
The quasilinear system of PDEs studied there degenerated 
at points where the Gauss curvature of the surface vanishes.
As in Dreher's work \cite{mD1999},
the degeneracy was ``prescribed''
in the sense that it was caused by the Gauss curvature
(which is ``known'').
Hence, the authors were free to make technical assumptions 
on the Gauss curvature to ensure the local well-posedness of the PDE system.

We now give another example of prior work that is closely connected to our main results.
In \cite{zRiWhY2016}, the authors proved local well-posedness 
in homogeneous Sobolev spaces 
on domains of the form $[0,T) \times \mathbb{R}^n$
for semilinear Tricomi equations of the form
\begin{align} \label{E:SEMILINEARTICOMI}
	-\partial_t^2 \Psi + t^P \Delta \Psi 
	& = f(\Psi),
\end{align}
where $P \in \mathbb{N}$
and $f$ is a nonlinearity such that $f$ and $f'$ obey
certain $P,n-$dependent power-law growth bounds at $\infty$.
See \cites{zRiWhY2015a,zRiWhY2015b} for related results.
Note that the coefficient $t^P$ in \eqref{E:SEMILINEARTICOMI}
does not oscillate; once again, this is the key difference 
compared to the ill-posedness result
for equation \eqref{E:TRICOMIMODEL} mentioned above.
As we described in Subsect.\ \ref{SS:BRIEFOVERVIEW},
equation \eqref{E:SEMILINEARTICOMI} 
is a good model for the solutions to equation \eqref{E:MODELWAVE}
provided by our main results
in the sense that the degenerating coefficient $(1 + \Psi)^P$ in \eqref{E:MODELWAVE}
behaves in some ways, when $1 + \Psi$ is small, 
like\footnote{The key point is that since our solutions are such that $\partial_t \Psi < 0$ when $1 + \Psi = 0$,
it follows that $1 + \Psi$ behaves, to first order, linearly in $t$ near points where it vanishes.} 
the coefficient $t^P$ (near $t = 0$)
in \eqref{E:SEMILINEARTICOMI}.

In view of the above discussion, 
we believe that one should not expect to be able to solve equation
\eqref{E:MODELWAVE} in Sobolev spaces all the way up to points with $1 + \Psi = 0$
unless one makes assumptions on the data 
that preclude highly oscillatory behavior in regions where $1 + \Psi$ is small.
In this article, we avoid the oscillatory behavior by
exploiting the \emph{relative} largeness of $[\partial_t \Psi]_-$ 
and the \emph{relative} smallness of $\partial_t^2 \Psi$
in regions where $1 + \Psi$ is small, 
which are present at time $0$ and which we propagate;
see the estimates \eqref{E:PSISMALLIMPLIESPARTIALTPSINEGATIVE} 
and \eqref{E:BOOTSTRAPIMPROVED}.
As we have mentioned,
the relative largeness of $[\partial_t \Psi]_-$
can be viewed as a kind of monotonicity in the problem.
One might say that this monotonicity makes up for the lack of remarkable
structure in \eqref{E:MODELWAVE}, including that it is not an Euler-Lagrange equation, 
its solutions admit no known coercive conserved quantities,
and the nonlinearities are not signed.
As we described in Subsect.\ \ref{SS:BRIEFOVERVIEW},
this monotonicity yields an important signed spacetime
integral that we use to close the energy estimates;
see the spacetime integral on the left-hand side of \eqref{E:MAINAPRIORIENERGYESTIMATE}.
The largeness of $[\partial_t \Psi]_-$
is connected to so-called \emph{Levi-type} conditions
that have appeared in the literature.
Roughly, a Levi condition is a quantitative relationship between the
sizes of various coefficients in the equation and their derivatives.
As an example, we note that in their 
study \cite{pDpT2001} of well-posedness 
for equation \eqref{E:NASHMOSERWELLPOSEDNESS} with analytic coefficients,
the authors studied linearized equations
of the form \eqref{E:MODELLINEARDEGENERATEHYPERBOLIC}
under the Levi condition
$|b(t,x)| \lesssim |a(t,x)| + |\partial_t \sqrt{a}(t,x)|$;
the Levi condition allowed them,
for the linearized equation, 
to construct suitable weights
for the energy estimates (even at points where $a$ vanishes), 
which were sufficient for proving well-posedness.
In the problems under study here, 
the largeness of $[\partial_t \Psi]_-$
at points with $1 + \Psi = 0$
can be viewed as a Levi-type condition
for the coefficient $(1 + \Psi)^P$
in equation \eqref{E:MODELWAVE},
which allows us to control 
various error terms
that arise when we derive energy estimates
for the solution's higher derivatives.

The degenerate energy estimates featured 
in our proofs have some features in common with 
the foundational works
\cites{sA1999a,sA1999b,sA2001b,sA2002,dC2007}
of Alinhac and Christodoulou
on the formation of shock singularities in solutions to quasilinear wave equations in 
two or three spatial dimensions;
see also the follow-up works \cites{dCsM2014,jSgHjLwW2016,jS2016,bDiWhY2017,bDiWhY2015b,bDiWhY2015} 
and the survey article \cite{gHsKjSwW2016}.
In those works, the authors constructed a dynamic geometric coordinate system that
degenerated in a precise fashion\footnote{In essence, the authors
straightened out the characteristics via a solution-dependent change of coordinates.} 
as the shock formed. Consequently, relative to the geometric coordinates,
the solution remains rather smooth,\footnote{The high-order geometric energies
were allowed to blow up at the shock, which led to enormous technical complications in the proofs.
Note that this possible high-order energy blowup is distinct from the formation of the shock singularity,
which corresponds to the blowup of a low-order Cartesian coordinate partial derivative of the solution.} 
which was a key fact used to control error terms.
A crucial feature of the proofs is that the energy estimates\footnote{There are 
many shock-formation results for solutions to quasilinear equations 
in one spatial dimension,
with important contributions coming from
Riemann \cite{bR1860}, 
Ole{\u{\i}}nik \cite{oO1957}, 
Lax \cite{pL1957}, 
Klainerman and Majda \cite{sKaM1980},
John \cites{fJ1974,fJ1981,fJ1984},
and many others.
However, those results are based exclusively on the
method of characteristics and hence,
unlike in the case of two or more spatial dimensions,
the proofs do not rely on energy estimates.}  
contained weights
that vanished at the shock, which is in analogy with the vanishing of
the weight $(1 + \Psi)^P$ in \eqref{E:INTROENERGYDEF}
at the degeneracy.
A second crucial feature of the proofs of shock formation
is that they relied on the fact that the weight has a \emph{quantitatively strictly negative time derivative}
in a neighborhood of points where it vanishes. This yields a critically important monotonic spacetime integral
that is in analogy with the one \eqref{E:FRICTIONINTEGRAL} that we use to control various error terms 
in the present work.

We close this subsection by noting that 
the degeneracy that we encounter in our study of equation \eqref{E:MODELWAVE}
is related to -- but distinct from -- a particular kind of absence of strict hyperbolicity
that has been studied in the context of the compressible Euler equations for initial data
satisfying the physical vacuum condition;
see, for example, \cites{dChLsS2010,dCsS2011,dCsS2012,jJnM2011,jJnM2009}.
The key difference between those works and ours is that in 
those works, the degeneracies occurred along
the fluid-vacuum boundary in spacelike directions rather than a timelike one. 
In particular, the degeneracy was already present at time $0$.
More precisely, at time $0$, the fluid density vanished at a certain rate,
meaning that the density's derivative in the (spacelike) normal direction
to the vacuum boundary satisfied a quantitative signed condition.
It turns out that this condition yields a signed integral in the
energy identities that is essential for closing the energy estimates. 
The signed integral exploited in those works is analogous to the integral 
\eqref{E:FRICTIONINTEGRAL}, but the integrals in the above papers 
were available because the solution's (spacelike) normal derivative had a sign, which is in contrast
to the sign of the timelike derivative $\partial_t \Psi$ exploited in the present work.
With the help of the signed integral, the authors of the above papers
were able to prove degenerate energy estimates with weights that vanished at a certain rate in the normal direction
to the vacuum boundary. Ultimately, these degenerate estimates 
allowed them to prove local well-posedness in Sobolev spaces with weights
that degenerate at the fluid-vacuum boundary.

\subsection{Notation}
\label{SS:NOTATION}
In this subsection, we summarize some notation that we use throughout.

\begin{itemize}
	\item $\lbrace x^{\alpha} \rbrace_{\alpha=0,1,2,3}$ denotes
		the standard rectangular coordinates on $\mathbb{R}^{1+3} = \mathbb{R} \times \mathbb{R}^3$
		and
			$
	\displaystyle
	\partial_{\alpha} 
	:=
	\frac{\partial}{\partial x^{\alpha}}
	$
	denotes the corresponding coordinate partial derivative vector fields.
		$x^0 \in \mathbb{R}$ is the time coordinate and $\underline{x} := (x^1,x^2,x^3) \in \mathbb{R}^3$
		are the spatial coordinates.
	\item We often use the alternate notation $x^0 = t$ and $\partial_0 = \partial_t$.
	\item Greek ``spacetime'' indices such as $\alpha$ vary over $0,1,2,3$ and
		Latin ``spatial'' indices such as $a$ vary over $1,2,3$.
		We use primed indices, such as $a'$, in the same way that we use their non-primed counterparts.
		We use Einstein's summation convention in that repeated indices are summed
		over their respective ranges.
	\item We raise and lower indices with $g^{-1}$ and $g$ respectively
		(\emph{not} with the Minkowski metric!).
	\item We sometimes omit the arguments of functions appearing in pointwise inequalities. For example,
		we sometimes write $|f| \leq C \mathring{\upepsilon}$
		instead of $|f(t,\underline{x})| \leq C \mathring{\upepsilon}$.
	\item $\nabla^k \Psi$ denotes the array comprising all $k^{th}-$order
		derivatives of $\Psi$ with respect to the rectangular spatial coordinate vector fields.
		We often use the alternate notation $\nabla \Psi$ in place of $\nabla^1 \Psi$.
		For example, $\nabla^1 \Psi = \nabla \Psi := (\partial_1 \Psi, \partial_2 \Psi, \partial_3 \Psi)$.
	\item 
	$|\nabla^{\leq k} \Psi| 
	:= \sum_{k'=0}^k |\nabla^{k'} \Psi|
	$.
	\item 
		$|\nabla^{[a,b]} \Psi| 
		:= \sum_{k'=a}^b |\nabla^{k'} \Psi|
		$.
	\item $H^N(\Sigma_t)$ denotes the standard Sobolev space of functions on $\Sigma_t$
	with corresponding norm 
	\[
	\displaystyle
	\| f \|_{H^N(\Sigma_t)}
	:= 
	\left\lbrace
		\sum_{a_1 + a_2 + a_3 \leq N}
		\int_{\underline{x} \in \mathbb{R}^3}
			|\partial_1^{a_1} \partial_2^{a_2} \partial_3^{a_3}f(t,\underline{x})|^2
		\, d \underline{x}
	\right\rbrace^{1/2}.
	\]
	In the case $N=0$, we use the notation ``$L^2$'' in place of ``$H^0$.''
	\item 
	$L^{\infty}(\Sigma_t)$ denotes the standard Lebesgue space of functions on $\Sigma_t$
	with corresponding norm 
	$
	\displaystyle
	\| f \|_{L^{\infty}(\Sigma_t)}
	:= 
	\mbox{\upshape ess sup}_{\underline{x} \in \mathbb{R}^3}
	|f(t,\underline{x})|
	$.
	\item If $A$ and $B$ are two quantities, then we often write 
		$A \lesssim B$
		to indicate that ``there exists a constant $C > 0$ such that $A \leq C B$.''
	\item We sometimes write $\mathcal{O}(B)$ to denote a quantity $A$ 
	with the following property: there exists a constant $C > 0$ such that $|A| \leq C |B|$.
\end{itemize}

\section{Assumptions on the initial data and bootstrap assumptions}
\label{S:DATAANDBOOTSTRAP}
In this short section, we state our assumptions on the data
$
(\Psi|_{\Sigma_0},\partial_t \Psi|_{\Sigma_0})
:= (\mathring{\Psi},\mathring{\Psi}_0)
$
for the model equation \eqref{E:MODELWAVE}
and formulate bootstrap assumptions that are convenient
for studying the solution.
We also show that there exist data satisfying 
the assumptions.

\subsection{Assumptions on the data}
\label{SS:DATAASSUMPTIONS}
We assume that the initial data 
are compactly supported 
and satisfy the assumptions
\begin{align} \label{E:DATASIZE}
	\| \nabla^{\leq 4} \mathring{\Psi} \|_{L^{\infty}(\Sigma_0)}
	+
	\| \nabla^{[1,3]} \mathring{\Psi}_0 \|_{L^{\infty}(\Sigma_0)}
	+
	\| \nabla^2 \mathring{\Psi} \|_{H^4(\Sigma_0)}
	+
	\| \nabla^2 \mathring{\Psi}_0 \|_{H^3(\Sigma_0)}
	& \leq \mathring{\upepsilon},
	\qquad
	\| \mathring{\Psi}_0 \|_{L^{\infty}(\Sigma_0)}
	\leq \mathring{\updelta},
\end{align}	
where $\mathring{\upepsilon}$ 
and $\mathring{\updelta}$
are two data-size parameters that 
we will discuss below
(roughly, $\mathring{\upepsilon}$ will have to be small for our proofs to close).
Roughly speaking, in our analysis, we will approximately propagate the above
size assumptions all the way up until the time of breakdown in hyperbolicity,
except at the top derivative level.
More precisely, we are not able to uniformly control the
top-order spatial derivatives of $\Psi$ in the norm 
$\| \cdot \|_{L^2(\Sigma_t)}$ up to the time of breakdown
due to the presence of degenerate weights in our energy
(see Def.\ \ref{D:ENERGYDEF}).

Before we can proceed, we must first introduce the 
crucial parameter $\mathring{\updelta}_*$ that 
controls the time of first breakdown in hyperbolicity;
our analysis shows that for
$\mathring{\upepsilon}$ sufficiently small, the time
of first breakdown is 
$\left\lbrace 1 + \mathcal{O}(\mathring{\upepsilon})\right\rbrace \mathring{\updelta}_*^{-1}$;
see also Remark~\ref{R:CRUCIALDELTAPARAMETER}.

\begin{definition}[\textbf{The parameter that controls the time of breakdown in hyperbolicity}]
	\label{D:CRUCIALDATASIZEPARAMETER}
	We define the data-dependent parameter $\mathring{\updelta}_*$ as 
	\begin{align} \label{E:CRUCIALDATASIZEPARAMETER}
		\mathring{\updelta}_*
		& := 
		\max_{\Sigma_0} [\mathring{\Psi}_0]_-.
  \end{align}
\end{definition}

\begin{remark}[\textbf{The relevance of $\mathring{\updelta}_*$}]
\label{R:CRUCIALDELTAPARAMETER}
The solutions that we study are such that\footnote{Here ``$A \sim B$'' imprecisely indicates
that $A$ is well-approximated by $B$.}
$\mathring{\Psi} \sim 0$
and
$
\partial_t^2 \Psi \sim 0
$
(throughout the evolution).
Hence, by the fundamental theorem of calculus,
we have
$\partial_t \Psi(t,\underline{x}) \sim \mathring{\Psi}_0(\underline{x})$
and $1 + \Psi(t,\underline{x}) \sim 1 + t \mathring{\Psi}_0(\underline{x})$. 
From this last expression, we see that
$1 + \Psi$ is expected to vanish for the first time at approximately
$t = \mathring{\updelta}_*^{-1}$.
See Lemma~\ref{L:POINTWISEFORPSIANDPARTIALTPSI} for the precise statements.
\end{remark}

\subsection{Bootstrap assumptions}
\label{SS:BOOTSTRAP}
In proving our main results, we find it convenient to rely on
a set of bootstrap assumptions, which we provide in this subsection.

\medskip

\noindent \underline{\textbf{The size of} $T_{(Boot)}$}.
We assume that $T_{(Boot)}$
is a bootstrap time with
\begin{align} \label{E:BOOTSTRAPTIME}
	0 < T_{(Boot)} \leq 2 \mathring{\updelta}_*^{-1}.
\end{align}
The assumption \eqref{E:BOOTSTRAPTIME} 
gives us a sufficient margin of error
to prove that finite-time degeneration of hyperbolicity occurs,
as we explained in Remark~\ref{R:CRUCIALDELTAPARAMETER}.

\medskip

\noindent \underline{\textbf{Degeneracy has not yet occurred}}.
We assume that for $(t,\underline{x}) \in [0,T_{(Boot)}) \times \mathbb{R}^3$,
we have
\begin{align} \label{E:HYPERBOLICBOOTSTRAP}
	0 < 1 + \Psi (t,\underline{x}).
\end{align}

\medskip

\noindent \underline{$L^{\infty}$ \textbf{bootstrap assumptions}}.
We assume that for $t \in [0,T_{(Boot)})$, we have
\begin{subequations}
\begin{align} \label{E:PSIITSELFBOOTSTRAP}
	\| \Psi \|_{L^{\infty}(\Sigma_t)}
	& \leq 2 \mathring{\updelta}_*^{-1} \mathring{\updelta}	
		+ \varepsilon^{1/2},
			\\
	\| \partial_t \Psi \|_{L^{\infty}(\Sigma_t)}
	&  \leq 
		\mathring{\updelta}	
		+ \varepsilon^{1/2},
			\label{E:PARTIALTPSIBOOTSTRAP} \\
	\| \nabla^{[1,3]} \Psi \|_{L^{\infty}(\Sigma_t)},
		\,
	\| \partial_t \nabla^{[1,3]} \Psi \|_{L^{\infty}(\Sigma_t)},
		\,
	\| \partial_t^2 \nabla^{\leq 1} \Psi \|_{L^{\infty}(\Sigma_t)}
	& \leq \varepsilon,
	 \label{E:SMALLLINFTYBOOTSTRAP}
\end{align}
\end{subequations}
where $\varepsilon > 0$ is a small bootstrap parameter;
we describe our smallness assumptions in the next subsection.

\begin{remark}[\textbf{The solution remains compactly supported in space}]
		From \eqref{E:PSIITSELFBOOTSTRAP}, we deduce that the wave speed
		$(1 + \Psi)^{P/2}$ associated to equation~\eqref{E:MODELWAVE}
		remains uniformly bounded from above for $(t,\underline{x}) \in [0,T_{(Boot)}) \times \mathbb{R}^3$.
		Hence, there exists a large, data-dependent ball $B \subset \mathbb{R}^3$
		such that $\Psi(t,\underline{x})$ vanishes
		for $(t,\underline{x}) \in [0,T_{(Boot)}) \times B^c$,
		where $B^c$ denotes the complement of $B$ in $\mathbb{R}^3$.
\end{remark}

\subsection{Smallness assumptions}
\label{SS:SMALLNESSASSUMPTIONS}
For the rest of the article, 
when we say that ``$A$ is small relative to $B$,''
we mean that $B > 0$ and that there exists a continuous increasing function 
$f :(0,\infty) \rightarrow (0,\infty)$ 
such that 
$
\displaystyle
A \leq f(B)
$.
In principle, the functions $f$ could always be chosen to be 
polynomials with positive coefficients or exponential functions.
However, to avoid lengthening the paper, we typically do not 
specify the form of $f$.

Throughout the rest of the paper, we make the following
relative smallness assumptions. We
continually adjust the required smallness
in order to close our estimates.
\begin{itemize}
	\item The bootstrap parameter $\varepsilon$ from Subsect.\ \ref{SS:BOOTSTRAP}
		is small relative to $\mathring{\updelta}^{-1}$,
		where $\mathring{\updelta}$ is the data-size parameter 
		from \eqref{E:DATASIZE}.
	\item $\varepsilon$ is small relative to 
		the data-size parameter $\mathring{\updelta}_*$ 
		from \eqref{E:CRUCIALDATASIZEPARAMETER}.
\end{itemize}
The first assumption will allow us to control error terms that,
roughly speaking, are of size $\varepsilon \mathring{\updelta}^k$ 
for some integer $k \geq 0$. The second assumption 
is relevant because the expected degeneracy-formation time is 
approximately $\mathring{\updelta}_*^{-1}$
(see Remark~\ref{R:CRUCIALDELTAPARAMETER});
the assumption will allow us to show that various
error products featuring a small factor $\varepsilon$
remain small for $t \leq 2 \mathring{\updelta}_*^{-1}$, 
which is plenty of time for us to show that 
$1 + \Psi$ vanishes.

Finally, we assume that
\begin{align} \label{E:DATAEPSILONVSBOOTSTRAPEPSILON}
	\varepsilon^{3/2}
	& \leq
	\mathring{\upepsilon} 
	\leq \varepsilon,
\end{align}
where $\mathring{\upepsilon}$ is the data smallness parameter from \eqref{E:DATASIZE}.

\subsection{Existence of data}
\label{SS:EXISTENCEOFDATA}
It is easy to construct data
such that the parameters
$\mathring{\upepsilon}$,
$\mathring{\updelta}$,
and 
$\mathring{\updelta}_*$
satisfy the relative size assumptions stated in Subsect.\ \ref{SS:SMALLNESSASSUMPTIONS}.
For example, we can start with \emph{any} smooth compactly supported data 
$(\mathring{\Psi},\mathring{\Psi}_0)$
such that $\min_{\mathbb{R}^3} \mathring{\Psi}_0 < 0$.
We then consider the one-parameter family
\[
\left(
	\leftexp{(\uplambda)}{\mathring{\Psi}}(\underline{x}),
	\leftexp{(\uplambda)}{\mathring{\Psi}_0}(\underline{x})
\right)
:=
\left(
	\uplambda^{-1} \mathring{\Psi}(\underline{x}),
	\mathring{\Psi}_0(\uplambda^{-1} \underline{x})
\right).
\]
One can check that for $\uplambda > 0$ sufficiently large,
all of the size assumptions of
Subsect.\ \ref{SS:SMALLNESSASSUMPTIONS} are satisfied.
The proof relies on the simple scaling identities
\begin{subequations}
\begin{align}
	\nabla^k \leftexp{(\uplambda)}{\mathring{\Psi}}(\underline{x})
	= \uplambda^{-1} (\nabla^k \mathring{\Psi})(\underline{x}),
			\\
	\nabla^k \leftexp{(\uplambda)}{\mathring{\Psi}_0}(\underline{x})
= \uplambda^{-k} (\nabla^k \mathring{\Psi}_0)(\uplambda^{-1} \underline{x})
\end{align}
\end{subequations}
and
\begin{subequations}
\begin{align}
	\left\|
		\nabla^k \leftexp{(\uplambda)}{\mathring{\Psi}}
	\right\|_{L^2(\Sigma_0)}
	& = \uplambda^{-1} \| \mathring{\Psi} \|_{L^2(\Sigma_0)},
			\\
	\left\|
		\nabla^k \leftexp{(\uplambda)}{\mathring{\Psi}_0}
	\right\|_{L^2(\Sigma_0)}
	& = \uplambda^{3/2 - k}
			\| \mathring{\Psi}_0 \|_{L^2(\Sigma_0)}.
\end{align}
\end{subequations}

\begin{remark}[\textbf{Degeneracy occurs for solutions launched by 
	any appropriately rescaled non-trivial data}]
	\label{R:GENERICDEGENERACY}
	The discussion in Subsect.\ \ref{SS:EXISTENCEOFDATA}
	can easily be extended to show that
	if $\mathring{\Psi}_0$ is non-trivial, then
	one \emph{always} generates data to which
	our results apply
	by considering the rescaled data
	$
	\left(
	\leftexp{(\uplambda)}{\mathring{\Psi}},
	\leftexp{(\uplambda)}{\mathring{\Psi}_0}
	\right)
	$
	with $\uplambda$ sufficiently large.
	More precisely, if
	$\min_{\mathbb{R}^3} \mathring{\Psi}_0 = 0$, then
	we must have
	$\max_{\mathbb{R}^3} \mathring{\Psi}_0 > 0$;
	in this case, the degeneracy in the solution generated by the rescaled
	data occurs in the past rather than the future.

\end{remark}

\section{A priori estimates}
\label{S:APRIORIESTIMATES}
In this section, we use the
data-size assumptions and the bootstrap assumptions 
of Sect.\ \ref{S:DATAANDBOOTSTRAP}
to derive a priori estimates for the solution.
This is the main step in the proof our results.

\subsection{Conventions for constants}
	\label{SS:CONVENTIONSFORCONSTANTS}
	In our estimates, the explicit constants $C > 0$ and $c > 0$ are free to vary from line to line.
	\textbf{These constants are allowed to depend on the data-size parameters
	$\mathring{\updelta}$
	and 
	$\mathring{\updelta}_*^{-1}$
	from Subsect.\ \ref{SS:DATAASSUMPTIONS}}.
	However, the constants can be chosen to be 
	independent of the parameters $\mathring{\upepsilon}$ 
	and $\varepsilon$ whenever $\mathring{\upepsilon}$ and $\varepsilon$
	are sufficiently small relative to 
	$\mathring{\updelta}^{-1}$
	and $\mathring{\updelta}_*$
	in the sense described in Subsect.\ \ref{SS:SMALLNESSASSUMPTIONS}.
	For example, under our conventions, we have that $\mathring{\updelta}_*^{-2} \varepsilon = \mathcal{O}(\varepsilon)$.

\subsection{Pointwise estimates}
\label{E:POINTWISEESTIMATES}
In this subsection, we derive pointwise estimates for
$\Psi$ and the inhomogeneous 
terms in the commuted wave equation.

We start with a simple lemma that provides sharp pointwise estimates
for $\Psi$ and $\partial_t \Psi$.

\begin{lemma}[\textbf{Pointwise estimates for} $\Psi$ \textbf{and} $\partial_t \Psi$]
	\label{L:POINTWISEFORPSIANDPARTIALTPSI}
	Under the data-size assumptions of Subsect.\ \ref{SS:DATAASSUMPTIONS},
	the bootstrap assumptions of Subsect.\ \ref{SS:BOOTSTRAP},
	and the smallness assumptions of Subsect.\ \ref{SS:SMALLNESSASSUMPTIONS},
	the following pointwise estimates hold for
	$(t,\underline{x}) \in [0, T_{(Boot)}) \times \mathbb{R}^3$:
	\begin{subequations}
	\begin{align}
		\Psi(t,\underline{x})
		& = t \mathring{\Psi}_0(\underline{x})
			+
			\mathcal{O}(\varepsilon),
			\label{E:PSIWELLAPPROXIMATED} \\
		\partial_t \Psi(t,\underline{x})
		& = 
		\mathring{\Psi}_0(\underline{x})
		+ \mathcal{O}(\varepsilon).
		\label{E:PARTIALTPSIWELLAPPROXIMATED}
	\end{align}
	\end{subequations}
\end{lemma}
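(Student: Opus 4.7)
The plan is to prove both estimates by applying the fundamental theorem of calculus twice in the time variable, using the bootstrap smallness of $\partial_t^2 \Psi$ as the key input. Since $t \leq T_{(Boot)} \leq 2\mathring{\updelta}_*^{-1}$, and our constant conventions in Subsect.\ \ref{SS:CONVENTIONSFORCONSTANTS} allow constants to depend on $\mathring{\updelta}_*^{-1}$, any factor of $t\varepsilon$ picked up along the way is automatically $\mathcal{O}(\varepsilon)$.

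First I would prove \eqref{E:PARTIALTPSIWELLAPPROXIMATED}. By the fundamental theorem of calculus,
\[
\partial_t \Psi(t,\underline{x})
= \mathring{\Psi}_0(\underline{x}) + \int_0^t \partial_s^2 \Psi(s,\underline{x})\, ds.
\]
The bootstrap assumption \eqref{E:SMALLLINFTYBOOTSTRAP} gives $\|\partial_t^2 \Psi\|_{L^\infty(\Sigma_s)} \leq \varepsilon$ for all $s \in [0,T_{(Boot)})$, so the integral is bounded in absolute value by $t\varepsilon \leq 2\mathring{\updelta}_*^{-1}\varepsilon = \mathcal{O}(\varepsilon)$, which proves \eqref{E:PARTIALTPSIWELLAPPROXIMATED}. (One could equivalently obtain the same bound on $\partial_t^2 \Psi$ directly from the wave equation \eqref{E:MODELWAVE} via $\partial_t^2 \Psi = (1+\Psi)^P \Delta \Psi$ together with bootstraps \eqref{E:PSIITSELFBOOTSTRAP} and \eqref{E:SMALLLINFTYBOOTSTRAP}, but using the $\partial_t^2$ bootstrap directly is quicker.)

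Next I would prove \eqref{E:PSIWELLAPPROXIMATED} by integrating \eqref{E:PARTIALTPSIWELLAPPROXIMATED} in time:
\[
\Psi(t,\underline{x})
= \mathring{\Psi}(\underline{x}) + \int_0^t \partial_s \Psi(s,\underline{x})\, ds
= \mathring{\Psi}(\underline{x}) + t\mathring{\Psi}_0(\underline{x}) + \mathcal{O}(t\varepsilon).
\]
The data assumption \eqref{E:DATASIZE} gives $\|\mathring{\Psi}\|_{L^\infty(\Sigma_0)} \leq \mathring{\upepsilon}$, and then \eqref{E:DATAEPSILONVSBOOTSTRAPEPSILON} gives $\mathring{\upepsilon} \leq \varepsilon$, so $\mathring{\Psi}(\underline{x}) = \mathcal{O}(\varepsilon)$. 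Combining and using $t \leq 2\mathring{\updelta}_*^{-1}$ once more to absorb the $t\varepsilon$ error yields \eqref{E:PSIWELLAPPROXIMATED}.

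The proof is essentially a direct consequence of the bootstrap assumptions, with no real obstacle; the only point that needs minor care is bookkeeping the $\mathring{\updelta}_*^{-1}$ dependence in the constants, which is harmless under the conventions of Subsect.\ \ref{SS:CONVENTIONSFORCONSTANTS}. The substantive content of the lemma — namely, that the solution really does behave like the ODE $\partial_t^2 \Psi = 0$ up to time $T_{(Boot)}$ — has been shifted entirely into the bootstrap assumption \eqref{E:SMALLLINFTYBOOTSTRAP}; improving that bootstrap (via energy estimates and Sobolev embedding) is where the actual work lies, but that is a task for later subsections.
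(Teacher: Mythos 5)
Your proof is correct and follows essentially the same argument as the paper: two applications of the fundamental theorem of calculus in time, driven by the smallness of $\partial_t^2 \Psi$, with the $t \leq 2\mathring{\updelta}_*^{-1}$ bound and the conventions of Subsect.\ \ref{SS:CONVENTIONSFORCONSTANTS} absorbing the factor of $t$. The only cosmetic difference is that the paper obtains $|\partial_t^2 \Psi| \leq C\varepsilon$ by reading it off the wave equation \eqref{E:MODELWAVE} (using the bootstraps on $\Psi$ and $\nabla^2\Psi$) rather than invoking the $\partial_t^2 \Psi$ bootstrap in \eqref{E:SMALLLINFTYBOOTSTRAP} directly; as you correctly note, the two routes are interchangeable here.
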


\begin{proof}
	To derive \eqref{E:PARTIALTPSIWELLAPPROXIMATED},
	we first use the bootstrap assumptions to
	deduce 
	$
	\left\|
		(1 + \Psi)^P \Delta \Psi
	\right\|_{L^{\infty}(\Sigma_t)}
	\leq C \varepsilon
	$.
	Hence, from equation
	\eqref{E:MODELWAVE},
	we deduce the pointwise bound
	$
	\left|
		\partial_t^2 \Psi
	\right|
	\leq C \varepsilon
	$.
	From this estimate and the fundamental theorem of calculus,
	we conclude the desired bound \eqref{E:PARTIALTPSIWELLAPPROXIMATED}.
	The bound \eqref{E:PSIWELLAPPROXIMATED}
	then follows from the fundamental theorem of calculus,
	\eqref{E:PARTIALTPSIWELLAPPROXIMATED},
	and the small-data bound
	$\| \mathring{\Psi} \|_{L^{\infty}(\Sigma_0)}
	\leq \mathring{\upepsilon}
	\leq \varepsilon
	$.
\end{proof}

The next proposition captures the monotonicity 
that is present at points where $1 + \Psi$ is small.
It is of critical importance for the energy estimates.

\begin{proposition}[\textbf{Monotonicity near the degeneracy}]
	Under the data-size assumptions of Subsect.\ \ref{SS:DATAASSUMPTIONS},
	the bootstrap assumptions of Subsect.\ \ref{SS:BOOTSTRAP},
	and the smallness assumptions of Subsect.\ \ref{SS:SMALLNESSASSUMPTIONS},
	the following statement holds for 
	$(t,\underline{x}) \in [0, T_{(Boot)}) \times \mathbb{R}^3$:
	\begin{align} \label{E:PSISMALLIMPLIESPARTIALTPSINEGATIVE}
		\Psi(t,\underline{x}) 
		\leq
		-
		\frac{1}{2}
		\implies
		\partial_t \Psi(t,\underline{x})
		\leq
		-
		\frac{1}{8}
		\mathring{\updelta}_*,
	\end{align}
	where $\mathring{\updelta}_*$ is the data-dependent parameter from Def.\ \ref{D:CRUCIALDATASIZEPARAMETER}.
	
\end{proposition}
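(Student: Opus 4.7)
The plan is to derive the monotonicity statement \eqref{E:PSISMALLIMPLIESPARTIALTPSINEGATIVE} as a direct consequence of the two sharp pointwise approximations established in Lemma~\ref{L:POINTWISEFORPSIANDPARTIALTPSI}, namely $\Psi(t,\underline{x}) = t\mathring{\Psi}_0(\underline{x}) + \mathcal{O}(\varepsilon)$ and $\partial_t\Psi(t,\underline{x}) = \mathring{\Psi}_0(\underline{x}) + \mathcal{O}(\varepsilon)$, combined with the bootstrap bound \eqref{E:BOOTSTRAPTIME} on $T_{(Boot)}$ and the smallness assumptions of Subsect.~\ref{SS:SMALLNESSASSUMPTIONS}. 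The intuition is simple: if $1+\Psi$ is close to zero at time $t$, then since $\Psi$ essentially evolves linearly in $t$ with slope $\mathring{\Psi}_0$, the slope $\mathring{\Psi}_0(\underline{x})$ must be comparable to $-1/t$, and since $t \leq 2\mathring{\updelta}_*^{-1}$, this forces $\mathring{\Psi}_0(\underline{x})$ to be quantitatively negative in terms of $\mathring{\updelta}_*$. The estimate for $\partial_t\Psi$ then follows because $\partial_t\Psi \sim \mathring{\Psi}_0$.

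First I would fix $(t,\underline{x}) \in [0,T_{(Boot)}) \times \mathbb{R}^3$ with $\Psi(t,\underline{x}) \leq -1/2$. Plugging into \eqref{E:PSIWELLAPPROXIMATED} and rearranging yields
\begin{align*}
	t \mathring{\Psi}_0(\underline{x}) \leq -\frac{1}{2} + C\varepsilon.
\end{align*}
In particular, for $\varepsilon$ sufficiently small (relative to $\mathring{\updelta}^{-1}$), the right-hand side is strictly negative and hence $t > 0$ and $\mathring{\Psi}_0(\underline{x}) < 0$. Dividing by $t$ and using $0 < t \leq T_{(Boot)} \leq 2\mathring{\updelta}_*^{-1}$, which implies $1/t \geq \mathring{\updelta}_*/2$, gives
\begin{align*}
	\mathring{\Psi}_0(\underline{x})
	\;\leq\; \frac{-\tfrac{1}{2} + C\varepsilon}{t}
	\;\leq\; -\frac{\mathring{\updelta}_*}{4} + C\varepsilon\, \mathring{\updelta}_*.
\end{align*}
For $\varepsilon$ small relative to $1$ (say $C\varepsilon \leq 1/8$, which is permissible under the smallness conventions of Subsect.~\ref{SS:CONVENTIONSFORCONSTANTS}), this improves to $\mathring{\Psi}_0(\underline{x}) \leq -\tfrac{3}{16}\mathring{\updelta}_*$.

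Next I would substitute this bound into \eqref{E:PARTIALTPSIWELLAPPROXIMATED}, giving
\begin{align*}
	\partial_t \Psi(t,\underline{x})
	\;=\; \mathring{\Psi}_0(\underline{x}) + \mathcal{O}(\varepsilon)
	\;\leq\; -\frac{3}{16}\mathring{\updelta}_* + C\varepsilon.
\end{align*}
Since by the conventions of Subsect.~\ref{SS:CONVENTIONSFORCONSTANTS} the parameter $\varepsilon$ is small relative to $\mathring{\updelta}_*$, we may absorb the $C\varepsilon$ error into the main term, say by requiring $C\varepsilon \leq \tfrac{1}{16}\mathring{\updelta}_*$, which yields $\partial_t\Psi(t,\underline{x}) \leq -\tfrac{1}{8}\mathring{\updelta}_*$ as desired.

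There is no real obstacle here: the entire content of the proposition is packaged in the two approximation identities of Lemma~\ref{L:POINTWISEFORPSIANDPARTIALTPSI} together with the bootstrap ceiling on $T_{(Boot)}$. The only thing to be careful about is bookkeeping the smallness: the $\mathcal{O}(\varepsilon)$ errors must be controlled relative to $\mathring{\updelta}_*$ (not merely relative to $1$), which is precisely what the second bullet of Subsect.~\ref{SS:SMALLNESSASSUMPTIONS} guarantees.
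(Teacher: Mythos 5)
Your argument is correct and is essentially the same as the paper's: both proofs rest entirely on the two pointwise approximations of Lemma~\ref{L:POINTWISEFORPSIANDPARTIALTPSI}, the ceiling $t < 2\mathring{\updelta}_*^{-1}$ from \eqref{E:BOOTSTRAPTIME}, and the smallness of $\varepsilon$ relative to $\mathring{\updelta}_*$. The only cosmetic difference is that the paper combines \eqref{E:PSIWELLAPPROXIMATED} and \eqref{E:PARTIALTPSIWELLAPPROXIMATED} directly into $\Psi = t\,\partial_t\Psi + \mathcal{O}(\varepsilon)$ and divides by $t$, whereas you pass through the explicit intermediate bound on $\mathring{\Psi}_0(\underline{x})$; the content is identical.
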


\begin{proof}	
	To prove \eqref{E:PSISMALLIMPLIESPARTIALTPSINEGATIVE},
	we first use the estimates 
	\eqref{E:PSIWELLAPPROXIMATED} and \eqref{E:PARTIALTPSIWELLAPPROXIMATED}
	to deduce that
	$ 
	\Psi(t,\underline{x}) 
	=
	t \partial_t \Psi(t,\underline{x})
	+
	\mathcal{O}(\varepsilon).
	$	
	Hence, if 
	$\Psi(t,\underline{x}) 
		\leq
		-
		\frac{1}{2}
	$,
	then 
	$
	t \partial_t \Psi(t,\underline{x})
	\leq
	- 
	\frac{1}{4}
	$.
	Recalling that $0 \leq t < 2 \mathring{\updelta}_*^{-1}$,
	see \eqref{E:BOOTSTRAPTIME},
	we conclude \eqref{E:PSISMALLIMPLIESPARTIALTPSINEGATIVE}.
	
\end{proof}

We now derive pointwise estimates for the inhomogeneous terms in the commuted wave equation.

\begin{lemma}[\textbf{Pointwise estimates for the inhomogeneous terms}]
	\label{L:POINTWISEESTIMATES}
	Let $\Psi$ be a solution to the wave equation \eqref{E:MODELWAVE}.
	For $k=2,3,4,5$ and $P=1,2$, consider 
	following wave equation,\footnote{We do not bother to state the precise form of $F^{(k)}$ here.}
	obtained by commuting \eqref{E:MODELWAVE} with $\nabla^k$:
	\begin{align} \label{E:NABLAKOMMUTEDWAVE}
		- \partial_t^2 \nabla^k \Psi
		+ 
		(1 + \Psi)^P \Delta \nabla^k \Psi
	& = F^{(k)}.
	\end{align}
	Under the data-size assumptions of Subsect.\ \ref{SS:DATAASSUMPTIONS},
	the bootstrap assumptions of Subsect.\ \ref{SS:BOOTSTRAP},
	and the smallness assumptions of Subsect.\ \ref{SS:SMALLNESSASSUMPTIONS},
	the following pointwise estimates hold for
	$(t,\underline{x}) \in [0,T_{(Boot)}) \times \mathbb{R}^3$:
	\begin{align} \label{E:INHOMOGENEOUSTERMPOINTWISEBOUND}
	\left|
		F^{(k)}
	\right|
	& \leq C 
	\varepsilon |\nabla^{[2,k+1]} \Psi|,
	&& (P=1),
		\\
	\left|
		F^{(k)}
	\right|
	& \leq C \varepsilon (1 + \Psi) |\nabla^{k+1} \Psi|
	+
	\varepsilon |\nabla^{[2,k]} \Psi|,
	&& (P=2).
	\label{E:PISTWOINHOMOGENEOUSTERMPOINTWISEBOUND}
\end{align}
\end{lemma}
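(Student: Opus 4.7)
First I would apply $\nabla^k$ to equation \eqref{E:MODELWAVE} and use the fact that $\nabla^k$ commutes with both $\partial_t^2$ and $\Delta$ to read off
\[
F^{(k)} \;=\; (1+\Psi)^P \Delta \nabla^k \Psi \;-\; \nabla^k\bigl[(1+\Psi)^P \Delta \Psi\bigr].
\]
Expanding the right-hand side by the (multi-index) Leibniz rule produces a sum of terms in which at least one derivative falls on the factor $(1+\Psi)^P$; schematically,
\[
F^{(k)} \;=\; -\sum_{j=1}^{k} c_j\, \nabla^j\bigl[(1+\Psi)^P\bigr]\, \nabla^{k-j+2} \Psi,
\]
with positive combinatorial constants $c_j$ (I write $\nabla^k$ as though it were a single differentiation; keeping track of the actual multi-indices only changes the combinatorial constants).

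In the case $P=1$, $\nabla^j(1+\Psi) = \nabla^j \Psi$ for $j \geq 1$, so every term is a bilinear expression $\nabla^j \Psi \cdot \nabla^{k-j+2} \Psi$ whose two derivative counts sum to $k+2$. For $k \leq 5$, pigeonhole forces at least one of the two factors to carry at most $3$ derivatives; the $L^\infty$ bootstrap assumption \eqref{E:SMALLLINFTYBOOTSTRAP} bounds that factor by $\varepsilon$. The kept factor has the form $\nabla^m \Psi$ with $m \in \{j, k-j+2\}$, and $1 \leq j \leq k$ forces $m \in [2, k+1]$, which delivers \eqref{E:INHOMOGENEOUSTERMPOINTWISEBOUND}.

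In the case $P=2$, I would use $(1+\Psi)^2 = 1 + 2\Psi + \Psi^2$ to expand, for $j \geq 1$,
\[
\nabla^j\bigl[(1+\Psi)^2\bigr] \;=\; 2(1+\Psi) \nabla^j \Psi \;+\; \sum_{\substack{a+b = j \\ a, b \geq 1}} c_{a,b}\, \nabla^a \Psi \cdot \nabla^b \Psi.
\]
The unique contribution to $F^{(k)}$ involving $\nabla^{k+1} \Psi$ comes from the $j=1$ leading piece, which is a multiple of $(1+\Psi)\nabla \Psi \cdot \nabla^{k+1}\Psi$ and is therefore bounded by $C\varepsilon(1+\Psi)|\nabla^{k+1}\Psi|$ via \eqref{E:SMALLLINFTYBOOTSTRAP}. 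This is the origin of the first summand of \eqref{E:PISTWOINHOMOGENEOUSTERMPOINTWISEBOUND}, and it is crucial \emph{not} to absorb the $(1+\Psi)$ into $C$ -- this weight will be paired against the degeneracy in the energy estimates. For the leading pieces with $j \geq 2$ the kept factor now has order in $[2, k]$ and the accompanying $(1+\Psi)$ is bounded by an absolute constant via \eqref{E:PSIITSELFBOOTSTRAP}. For the quadratic secondary pieces, the three factors $\nabla^a\Psi, \nabla^b\Psi, \nabla^{k-j+2}\Psi$ have derivative counts summing to $k+2$, so pigeonhole forces at least two of them to have order $\leq 3$; bounding each such factor by $\varepsilon$ and using $\varepsilon^2 \leq \varepsilon$ yields the second summand of \eqref{E:PISTWOINHOMOGENEOUSTERMPOINTWISEBOUND}.

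The only real ``obstacle'' is bookkeeping: one must verify the pigeonhole argument term by term to confirm that the ``small'' factor in every commutator product lies in $\nabla^{[1,3]} \Psi$, which is where \eqref{E:SMALLLINFTYBOOTSTRAP} has teeth. This is precisely why the lemma is stated only for $k \leq 5$; to commute further one would need $L^\infty$ control on $\nabla^4 \Psi$, which is not propagated by the bootstrap. No cancellation is needed and there is nothing subtle beyond the careful separation of the $j=1$ term in the $P=2$ case.
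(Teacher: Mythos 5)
Your proof is correct and follows essentially the same route as the paper's: commute $\nabla^k$ through the equation, expand the commutator via the Leibniz rule, and bound the low-order factors using the $L^\infty$ bootstrap assumption \eqref{E:SMALLLINFTYBOOTSTRAP}, carefully preserving the $(1+\Psi)$ weight on the top-order $\nabla^{k+1}\Psi$ term in the $P=2$ case. The paper's own proof is essentially a two-line statement of the schematic bounds; your write-up supplies the pigeonhole bookkeeping (including the observation that $k\leq 5$ is what makes $\|\nabla^{[1,3]}\Psi\|_{L^\infty}\leq\varepsilon$ sufficient) that the paper leaves implicit.
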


\begin{proof}
	We first consider the case $P=1$.
	Commuting \eqref{E:MODELWAVE} with $\nabla^k$,
	we compute that
	\[
	\left|
		F^{(k)}
	\right|
	\leq C
	\mathop{\sum_{a + b \leq k+2}}_{1 \leq a, \, 2 \leq b \leq k+1}
	\left| 
		\nabla^a \Psi
	\right|
	\left|
		\nabla^b \Psi
	\right|.
	\]
	The desired estimate \eqref{E:INHOMOGENEOUSTERMPOINTWISEBOUND}
	then follows as a simple consequence of 
	this bound and the bootstrap assumptions.
	The proof of \eqref{E:PISTWOINHOMOGENEOUSTERMPOINTWISEBOUND} is similar,
	the difference being that when $P=2$,
	we have the bound
	\[
	\left|
		F^{(k)}
	\right|
	\leq C \varepsilon (1 + \Psi) |\nabla^{k+1} \Psi|
	+
	C
	\mathop{\sum_{a + b \leq k+2}}_{1 \leq a \leq k, \, 2 \leq b \leq k}
	\left|
		\nabla^a \Psi
	\right|
	\left|
		\nabla^b \Psi
	\right|.
	\]
\end{proof}

\subsection{Energy estimates}
\label{SS:ENERGYESTIMATES}

We will use the following energy, which corresponds to between two and five commutations of the wave equation with $\nabla$, 
in order to control solutions.

\begin{definition}[\textbf{The energy}]
	\label{D:ENERGYDEF}
	We define
	\begin{align} \label{E:ENERGYDEF}
		\mathcal{E}_{[2,5]}(t)
		& :=
			\sum_{k'=2}^5
			\int_{\Sigma_t}
			|\partial_t \nabla^{k'} \Psi|^2
			+
			(1 + \Psi)^P |\nabla \nabla^{k'} \Psi|^2
			+ 
			|\nabla^{k'} \Psi|^2
		\, d \underline{x}.
	\end{align}
\end{definition}

We now provide the basic energy identity satisfied by solutions.

\begin{lemma}[\textbf{Basic energy identity}]
	\label{L:BASICENERGYIDENTITY}
Let $\Psi$ be a solution to the wave equation \eqref{E:MODELWAVE}.
Let $\mathcal{E}_{[2,5]}$ be the energy defined in \eqref{E:ENERGYDEF}	
and let $F^{(k)}$ be the inhomogeneous term
from \eqref{E:NABLAKOMMUTEDWAVE}.
Then for $P=1,2$, 
we have the energy identity
\begin{align} \label{E:BASICENERGYIDENTITY}
		\mathcal{E}_{[2,5]}(t)
		& 
		=
		\mathcal{E}_{[2,5]}(0)
		+ 
		P
		\sum_{k'=2}^5
		\int_{s=0}^t
		\int_{\Sigma_s}
			(\partial_t \Psi)
			(1 + \Psi)^{P-1}
			|\nabla \nabla^{k'} \Psi|^2
		\, d \underline{x}
		\, ds
			\\
	& \ \
		-
		2 P
		\sum_{k'=2}^5
		\int_{s=0}^t
		\int_{\Sigma_s}
			(1 + \Psi)^{P-1}
			(\nabla \Psi)
			\cdot
			(\nabla \nabla^{k'} \Psi)
			(\partial_t \nabla^{k'} \Psi)
		\, d \underline{x}
		\, ds
			\notag \\
		&
		\ \
		-
		2
		\sum_{k'=2}^5
		\int_{s=0}^t
		\int_{\Sigma_s}
			(\partial_t \nabla^{k'} \Psi)
			F^{(k')}
		\, d \underline{x}
		\, ds
		+
		2
		\sum_{k'=2}^5
		\int_{s=0}^t
		\int_{\Sigma_s}
			(\partial_t \nabla^{k'} \Psi)
			(\nabla^{k'} \Psi)
		\, d \underline{x}
		\, ds.
		\notag
		\end{align}
\end{lemma}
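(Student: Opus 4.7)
The plan is to apply the standard multiplier method to the commuted wave equation \eqref{E:NABLAKOMMUTEDWAVE}. For each $k' \in \{2,3,4,5\}$, I multiply that equation by $-2\,\partial_t \nabla^{k'} \Psi$, integrate over $\Sigma_s$, integrate by parts in the spatial variables, and then integrate in time from $0$ to $t$. The compact spatial support of $\Psi$ noted in the remark following the bootstrap assumptions kills all spatial boundary terms at infinity, so no cutoffs are needed. The entire computation is then bookkeeping that tracks how the weight $(1+\Psi)^P$ produces the two weighted spacetime integrals on the right-hand side of \eqref{E:BASICENERGYIDENTITY}.

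The time-derivative term gives $(\partial_t \nabla^{k'}\Psi)(-2\partial_t^2 \nabla^{k'}\Psi) = -\partial_t |\partial_t \nabla^{k'}\Psi|^2$, whose time integral yields the $\int_{\Sigma_t}|\partial_t \nabla^{k'}\Psi|^2 \, d\underline{x}$ contribution to $\mathcal{E}_{[2,5]}(t)$ minus its data counterpart. For the spatial principal term, integration by parts on $\Sigma_s$ converts $2(1+\Psi)^P (\partial_t \nabla^{k'}\Psi)\,\Delta \nabla^{k'}\Psi$ into
\[
-2(1+\Psi)^P\, (\nabla \partial_t \nabla^{k'}\Psi)\cdot(\nabla \nabla^{k'}\Psi) \;-\; 2P(1+\Psi)^{P-1}(\nabla\Psi)\cdot(\nabla \nabla^{k'}\Psi)(\partial_t \nabla^{k'}\Psi).
\]
The second contribution is, after time integration, precisely the $-2P$ spacetime integral on the second line of \eqref{E:BASICENERGYIDENTITY}. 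For the first contribution I write $(\nabla \partial_t \nabla^{k'}\Psi)\cdot(\nabla \nabla^{k'}\Psi) = \tfrac{1}{2}\partial_t |\nabla \nabla^{k'}\Psi|^2$ and then commute the weight past $\partial_t$ via the identity
\[
(1+\Psi)^P\, \partial_t |\nabla \nabla^{k'}\Psi|^2 \;=\; \partial_t\bigl[(1+\Psi)^P |\nabla \nabla^{k'}\Psi|^2\bigr] \;-\; P(1+\Psi)^{P-1}(\partial_t \Psi) |\nabla \nabla^{k'}\Psi|^2.
\]
Time-integrating the first summand supplies the $\int_{\Sigma_t}(1+\Psi)^P |\nabla \nabla^{k'}\Psi|^2\,d\underline{x}$ piece of $\mathcal{E}_{[2,5]}(t)$ minus its data counterpart, while time-integrating the second summand produces the crucial ``friction'' spacetime integral with coefficient $+P$ on the first line of \eqref{E:BASICENERGYIDENTITY}.

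The remaining $|\nabla^{k'}\Psi|^2$ piece of $\mathcal{E}_{[2,5]}$ is handled independently via $\partial_t|\nabla^{k'}\Psi|^2 = 2(\partial_t \nabla^{k'}\Psi)(\nabla^{k'}\Psi)$, whose time integral is exactly the last spacetime integral in \eqref{E:BASICENERGYIDENTITY}. The inhomogeneity $F^{(k')}$ of \eqref{E:NABLAKOMMUTEDWAVE} contributes the term $-2\int_0^t\int_{\Sigma_s}(\partial_t \nabla^{k'}\Psi)\,F^{(k')}\,d\underline{x}\,ds$ dictated by the multiplier. Summing over $k' = 2,3,4,5$ delivers \eqref{E:BASICENERGYIDENTITY}. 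No step is delicate; the only thing to watch is sign and coefficient bookkeeping, and the choice of multiplier $-2\partial_t \nabla^{k'}\Psi$ (rather than $-\partial_t \nabla^{k'}\Psi$) is precisely what forces both the boundary energy contributions and the two weight-produced spacetime integrals to emerge with the stated normalizations $+P$ and $-2P$.
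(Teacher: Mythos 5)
Your proof is correct and is exactly the approach the paper sketches: it is the standard multiplier-method derivation of the energy identity (equivalently, differentiating $\mathcal{E}_{[2,5]}$ in time, substituting the commuted equation, and integrating by parts on $\Sigma_s$). The only slip is notational: with the multiplier $-2\,\partial_t\nabla^{k'}\Psi$ applied to \eqref{E:NABLAKOMMUTEDWAVE}, the principal time term is $(-\partial_t^2\nabla^{k'}\Psi)(-2\partial_t\nabla^{k'}\Psi) = +\partial_t|\partial_t\nabla^{k'}\Psi|^2$, not $-\partial_t|\partial_t\nabla^{k'}\Psi|^2$; your subsequent signs indicate you actually used the multiplier $+2\,\partial_t\nabla^{k'}\Psi$, and since flipping the overall sign of the multiplier leaves the resulting identity unchanged once all terms are organized onto the two sides of the equation, the conclusion is unaffected.
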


\begin{proof}
	The identity \eqref{E:BASICENERGYIDENTITY} 
	is standard and can verified by
	taking the time derivative of both sides of \eqref{E:ENERGYDEF},
	using equation \eqref{E:NABLAKOMMUTEDWAVE}
	for substitution, integrating by parts over $\Sigma_t$,
	and then integrating the resulting 
	identity in time.
\end{proof}

With the help of Lemma~\ref{L:BASICENERGYIDENTITY},
we now derive an inequality satisfied by the energy
$\mathcal{E}_{[2,5]}$.

\begin{proposition}[\textbf{Integral inequality for the energy}]
	\label{P:MAININTEGRALINEQUALITYFORENEGY}
	Let $\mathcal{E}_{[2,5]}$ be the energy defined in \eqref{E:ENERGYDEF}.
	Let
	$\mathbf{1}_{\lbrace -1 < \Psi \leq - \frac{1}{2} \rbrace}$
	be the characteristic function of the spacetime subset
	$\lbrace (t,\underline{x}) \ | \ -1 < \Psi(t,\underline{x}) \leq - \frac{1}{2} \rbrace$
  and define $\mathbf{1}_{\lbrace - \frac{1}{2} < \Psi  \rbrace}$ analogously.
	Let $\mathring{\updelta}_*$ be the data-size parameter from Def.\ \ref{D:CRUCIALDATASIZEPARAMETER}.
	Under the data-size assumptions of Subsect.\ \ref{SS:DATAASSUMPTIONS},
	the bootstrap assumptions of Subsect.\ \ref{SS:BOOTSTRAP},
	and the smallness assumptions of Subsect.\ \ref{SS:SMALLNESSASSUMPTIONS},
	the following integral inequality holds for
	$P=1,2$ and $t \in [0,T_{(Boot)})$:
	\begin{align} \label{E:MAININTEGRALINEQUALITYFORENEGY}
		&
		\mathcal{E}_{[2,5]}(t)
		+
		\frac{P}{8}
		\mathring{\updelta}_*
		\sum_{k'=2}^5
		\int_{s=0}^t
		\int_{\Sigma_s}
			\mathbf{1}_{\lbrace -1 < \Psi \leq - \frac{1}{2} \rbrace}
			(1 + \Psi)^{P-1}
			|\nabla \nabla^{k'} \Psi|^2
		\, d \underline{x}
		\, ds
			\\
		& \leq
		\mathcal{E}_{[2,5]}(0)
		+ 
		C
		\sum_{k'=2}^5
		\int_{s=0}^t
			\int_{\Sigma_s}
				|\partial_t \nabla^{k'} \Psi|^2
			\, d \underline{x}
		\, ds
		+ 
		C
		\sum_{k'=2}^5
		\int_{s=0}^t
			\int_{\Sigma_s}
				|\nabla^{k'} \Psi|^2
			\, d \underline{x}
		\, ds
		\notag	\\
	& \ \
		+
		C
		\sum_{k'=2}^5
		\int_{s=0}^t
		\int_{\Sigma_s}
			\mathbf{1}_{\lbrace - \frac{1}{2} < \Psi  \rbrace}
			(1 + \Psi)^{P-1}
			|\nabla \nabla^{k'} \Psi|^2
		\, d \underline{x}
		\, ds
			\notag \\
	& \ \
		+
		C \varepsilon 
		\sum_{k'=2}^5
		\int_{s=0}^t
		\int_{\Sigma_s}
			\mathbf{1}_{\lbrace -1 < \Psi \leq - \frac{1}{2} \rbrace}
			(1 + \Psi)^{2(P-1)}
			|\nabla \nabla^{k'} \Psi|^2
		\, d \underline{x}
		\, ds.
		\notag
		\end{align}
\end{proposition}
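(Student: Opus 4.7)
\medskip

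The plan is to start from the basic energy identity \eqref{E:BASICENERGYIDENTITY} and bound, one by one, the four spacetime integrals on its right-hand side, exploiting in the first integral the monotonicity provided by Prop.~\eqref{E:PSISMALLIMPLIESPARTIALTPSINEGATIVE} and absorbing all remaining error terms into the four bulk integrals on the right of \eqref{E:MAININTEGRALINEQUALITYFORENEGY}.

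The most delicate term is the first sum, $P \sum_{k'=2}^5 \int\!\!\int (\partial_t\Psi)(1+\Psi)^{P-1}|\nabla\nabla^{k'}\Psi|^2\,d\underline{x}\,ds$, which must simultaneously generate the good friction integral on the left of \eqref{E:MAININTEGRALINEQUALITYFORENEGY} and be dominated by right-hand side terms where the sign is not favorable. I would decompose the domain of integration into $\{\Psi\le-1/2\}$ and $\{\Psi>-1/2\}$. On the former, \eqref{E:PSISMALLIMPLIESPARTIALTPSINEGATIVE} yields $\partial_t\Psi\le-\tfrac18\mathring{\updelta}_*$, and since $(1+\Psi)^{P-1}\ge 0$ by \eqref{E:HYPERBOLICBOOTSTRAP}, the integrand is bounded above by $-\tfrac{P\mathring{\updelta}_*}{8}(1+\Psi)^{P-1}|\nabla\nabla^{k'}\Psi|^2$; moving this contribution to the left produces exactly the friction integral on the LHS of \eqref{E:MAININTEGRALINEQUALITYFORENEGY} (the condition $\Psi>-1$ on the characteristic function coming from \eqref{E:HYPERBOLICBOOTSTRAP}). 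On $\{\Psi>-1/2\}$, the bootstrap \eqref{E:PARTIALTPSIBOOTSTRAP} gives $|\partial_t\Psi|\le C$, so the contribution is absorbed into the third bulk integral on the right of \eqref{E:MAININTEGRALINEQUALITYFORENEGY}.

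For the second sum in \eqref{E:BASICENERGYIDENTITY}, the bootstrap \eqref{E:SMALLLINFTYBOOTSTRAP} supplies $|\nabla\Psi|\le\varepsilon$, so applying Young's inequality $2ab\le a^2+b^2$ to $(1+\Psi)^{P-1}|\nabla\nabla^{k'}\Psi|\cdot|\partial_t\nabla^{k'}\Psi|$ yields an upper bound of $C\varepsilon\int(1+\Psi)^{2(P-1)}|\nabla\nabla^{k'}\Psi|^2\,d\underline{x}\,ds+C\varepsilon\int|\partial_t\nabla^{k'}\Psi|^2\,d\underline{x}\,ds$. The second piece is absorbed by the first bulk RHS integral. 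The first is again split along $\Psi=-1/2$: on $\{\Psi>-1/2\}$, the trivial bound $1+\Psi\le C$ from \eqref{E:PSIITSELFBOOTSTRAP} gives $(1+\Psi)^{2(P-1)}\le C(1+\Psi)^{P-1}$, absorbed into the third bulk RHS integral; on $\{\Psi\le-1/2\}$, the weight matches the fourth bulk RHS integral directly. The third sum in \eqref{E:BASICENERGYIDENTITY} is handled by Cauchy--Schwarz combined with the pointwise bounds from Lemma~\ref{L:POINTWISEESTIMATES}: for $P=1$ this gives $|\partial_t\nabla^{k'}\Psi|^2+C\varepsilon^2|\nabla^{[2,k'+1]}\Psi|^2$, and for $P=2$ an analogous bound involving $C\varepsilon^2(1+\Psi)^2|\nabla^{k'+1}\Psi|^2$ for the top-order factor and $C\varepsilon^2|\nabla^{[2,k']}\Psi|^2$ for the lower-order ones. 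The $|\partial_t\nabla^{k'}\Psi|^2$ and lower-order $|\nabla^j\Psi|^2$ ($2\le j\le 5$) contributions are absorbed into the first two bulk RHS integrals, while the top-order factor $|\nabla\nabla^{k'}\Psi|^2$ is weighted by $1=(1+\Psi)^{2(P-1)}$ for $P=1$ or $(1+\Psi)^2=(1+\Psi)^{2(P-1)}$ for $P=2$ and is therefore treated by the same two-region splitting as in Step~2.

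Finally, the fourth sum in \eqref{E:BASICENERGYIDENTITY} is immediately bounded by $|\partial_t\nabla^{k'}\Psi|^2+|\nabla^{k'}\Psi|^2$ via Cauchy--Schwarz, matching the first two bulk RHS integrals. Summing over $k'=2,\ldots,5$ and collecting all contributions produces \eqref{E:MAININTEGRALINEQUALITYFORENEGY}. The main obstacle is the top-order ($k'=5$) spatial derivative term $|\nabla\nabla^{5}\Psi|^2$: in the region $\{\Psi\le-1/2\}$ the weight $(1+\Psi)^{P-1}$ in the energy $\mathcal{E}_{[2,5]}$ degenerates and cannot by itself control $|\nabla\nabla^5\Psi|^2$, which is precisely why the small factor $\varepsilon$ from \eqref{E:SMALLLINFTYBOOTSTRAP} is essential in the fourth RHS bulk integral of \eqref{E:MAININTEGRALINEQUALITYFORENEGY} and why the two-region splitting, separating the monotonicity regime from the non-degenerate regime, is unavoidable.
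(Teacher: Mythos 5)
Your proposal is correct and follows essentially the same route as the paper: start from the basic energy identity of Lemma~\ref{L:BASICENERGYIDENTITY}, split the domain of the first spacetime integral along $\Psi = -\frac{1}{2}$ so that Prop.~\eqref{E:PSISMALLIMPLIESPARTIALTPSINEGATIVE} yields the friction integral on the left while the non-degenerate region is absorbed into the third bulk term, and then apply Young's inequality together with the bootstrap bound $|\nabla\Psi|\leq\varepsilon$ and Lemma~\ref{L:POINTWISEESTIMATES} to control the remaining three sums. The only cosmetic differences are that you spell out the $P=2$ bookkeeping (the paper defers it as ``similar''), and you invoke \eqref{E:PARTIALTPSIBOOTSTRAP} rather than \eqref{E:PARTIALTPSIWELLAPPROXIMATED} for the $L^\infty$ bound on $\partial_t\Psi$ — both give the same $\leq C$ bound under the paper's conventions for constants.
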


\begin{proof}
	We must bound the terms appearing in the energy identity \eqref{E:BASICENERGYIDENTITY}.
	We give the proof only for the case $P=1$ since the case $P=2$ can be handled using
	similar arguments.
	We start by bounding the first sum on the right-hand side of \eqref{E:BASICENERGYIDENTITY};
	this is the only term that requires careful treatment.
	We split the integration domain $[0,t] \times \mathbb{R}^3$
	into two pieces: a piece in which $-1 < \Psi \leq - \frac{1}{2}$
	and a piece in which $\Psi > - \frac{1}{2}$.
	To bound the first piece, we use the estimate 
	\eqref{E:PSISMALLIMPLIESPARTIALTPSINEGATIVE}
	to deduce that whenever $-1 < \Psi \leq - \frac{1}{2}$,
	the integrand satisfies
	$
	\displaystyle
	(\partial_t \Psi) |\nabla \nabla^{k'} \Psi|^2
	\leq
	-
		\frac{1}{8}
		\mathring{\updelta}_*
		|\nabla \nabla^{k'} \Psi|^2
	$. 
	We can therefore bring all of the corresponding integrals
	over to the left-hand side of \eqref{E:MAININTEGRALINEQUALITYFORENEGY} as 
	\emph{positive} integrals, as is indicated there.
	To bound the second piece, we use
	the estimate \eqref{E:PARTIALTPSIWELLAPPROXIMATED}
	to bound $\partial_t \Psi$ in $L^{\infty}$ by $\leq C$,
	which allows us to bound the integrand by
	$
	\leq C
	|\nabla \nabla^{k'} \Psi|^2
	$.	
	It follows that since $\Psi > - \frac{1}{2}$ (by assumption),
	the integrals under consideration are bounded by
	the third sum on the right-hand side of \eqref{E:MAININTEGRALINEQUALITYFORENEGY}.
	
	To bound the second sum on the right-hand side of \eqref{E:BASICENERGYIDENTITY},
	we first use the bootstrap assumption \eqref{E:SMALLLINFTYBOOTSTRAP}
	to bound the integrand factor $\nabla \Psi$ in $L^{\infty}$ by
	$\leq \varepsilon$. Thus, using Young's inequality,
	we bound the terms under consideration by $\leq$
	the sum of the first, third, and fourth sums on the right-hand side of \eqref{E:MAININTEGRALINEQUALITYFORENEGY}.
	
	To bound the third sum on the right-hand side of \eqref{E:BASICENERGYIDENTITY},
	we use \eqref{E:INHOMOGENEOUSTERMPOINTWISEBOUND} 
	and Young's inequality
	to bound the integrand by
	$
	\leq
	C
	\varepsilon 
	\sum_{k'=2}^5
	|\partial_t \nabla^{k'} \Psi|^2
	+
	C
	\varepsilon 
	\sum_{k'=2}^6
	|\nabla^{k'} \Psi|^2
	$.
	It is easy to see that the corresponding integrals
	are bounded by the right-hand side of \eqref{E:MAININTEGRALINEQUALITYFORENEGY}.
	
	Finally, using Young's inequality, we bound last sum on the right-hand side of \eqref{E:BASICENERGYIDENTITY}
	by the first two sums on the right-hand side of \eqref{E:MAININTEGRALINEQUALITYFORENEGY}.
\end{proof}

In the next proposition, 
we use Prop.\ \ref{P:MAININTEGRALINEQUALITYFORENEGY}
to derive our main a priori energy estimates. 
We also derive improvements of the bootstrap assumptions
\eqref{E:PSIITSELFBOOTSTRAP}-\eqref{E:SMALLLINFTYBOOTSTRAP}.

\begin{proposition}[\textbf{A priori energy estimates and improvement of the bootstrap assumptions}]
	\label{P:APRIORIESTIMATES}
	Let $\mathring{\updelta}_*$
	be the data-size parameter from
	\eqref{E:CRUCIALDATASIZEPARAMETER}
	and let $\mathbf{1}_{\lbrace -1 < \Psi \leq - \frac{1}{2} \rbrace}$
	be the characteristic function of the spacetime subset
	$\lbrace (t,\underline{x}) \ | \ -1 < \Psi(t,\underline{x}) \leq - \frac{1}{2} \rbrace$.
	There exists a constant $C > 0$ such that
	under the data-size assumptions of Subsect.\ \ref{SS:DATAASSUMPTIONS},
	the bootstrap assumptions of Subsect.\ \ref{SS:BOOTSTRAP},
	and the smallness assumptions of Subsect.\ \ref{SS:SMALLNESSASSUMPTIONS},
	the following a priori energy estimate holds for
	$P=1,2$ and $t \in [0,T_{(Boot)})$:
	\begin{align} \label{E:MAINAPRIORIENERGYESTIMATE}
		\mathcal{E}_{[2,5]}(t)
		+
		\frac{P}{16}
		\mathring{\updelta}_*
		\sum_{k'=2}^5
		\int_{s=0}^t
		\int_{\Sigma_s}
			\mathbf{1}_{\lbrace -1 < \Psi \leq - \frac{1}{2} \rbrace}
			(1 + \Psi)^{P-1}
			|\nabla \nabla^{k'} \Psi|^2
		\, d \underline{x}
		\, ds
		& \leq C \mathring{\upepsilon}^2.
	\end{align}
	Moreover, we have the following estimates,
	which are a strict improvement of the bootstrap
	assumptions 
	\eqref{E:PSIITSELFBOOTSTRAP}-\eqref{E:SMALLLINFTYBOOTSTRAP}
	for $\mathring{\upepsilon}$
	sufficiently small:
	\begin{subequations}
	\begin{align} \label{E:PSIITSELFBOOTSTRAPIMPROVED}
	\| \Psi \|_{L^{\infty}(\Sigma_t)}
	& \leq 2 \mathring{\updelta}_*^{-1} \mathring{\updelta}	
		+ C \mathring{\upepsilon},
			\\
	\| \partial_t \Psi \|_{L^{\infty}(\Sigma_t)}
	&  \leq 
		\mathring{\updelta}	
		+ C \mathring{\upepsilon},
			\label{E:PARTIALTPSIBOOTSTRAPIMPROVED}
				\\
	\label{E:BOOTSTRAPIMPROVED}
		\| \nabla^{[1,3]} \Psi \|_{L^{\infty}(\Sigma_t)},
			\,
		\| \partial_t \nabla^{[1,3]} \Psi \|_{L^{\infty}(\Sigma_t)},
			\,
		\| \partial_t^2 \nabla^{\leq 1} \Psi \|_{L^{\infty}(\Sigma_t)}
	& \leq C \mathring{\upepsilon}.
\end{align}
\end{subequations}
\end{proposition}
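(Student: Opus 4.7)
The plan is to close a Grönwall-type argument on the integral inequality \eqref{E:MAININTEGRALINEQUALITYFORENEGY} furnished by Prop.\ \ref{P:MAININTEGRALINEQUALITYFORENEGY}, and then to deduce the pointwise improvements via Sobolev embedding together with ODE-type integrations in time. First I would verify the initial-datum bound $\mathcal{E}_{[2,5]}(0) \leq C \mathring{\upepsilon}^2$, which is immediate from the size assumption \eqref{E:DATASIZE} together with $\partial_t \nabla^{k'} \Psi|_{\Sigma_0} = \nabla^{k'} \mathring{\Psi}_0$. Next I would dispatch the three error sums on the right-hand side of \eqref{E:MAININTEGRALINEQUALITYFORENEGY}. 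The first two are already of the form $C \int_0^t \mathcal{E}_{[2,5]}(s) \, ds$ and are ready for Grönwall. For the third sum (over $\{\Psi > -\tfrac{1}{2}\}$), I observe that on that set $(1+\Psi)^{P-1}$ is comparable to $(1+\Psi)^P$: for $P=1$ the two coincide, while for $P=2$ we have $(1+\Psi)^{P-1} = 1+\Psi$, bounded from above by \eqref{E:PSIITSELFBOOTSTRAP} and from below by $\tfrac{1}{2}$, so that $(1+\Psi)^{P-1}|\nabla\nabla^{k'}\Psi|^2 \leq C(1+\Psi)^P|\nabla\nabla^{k'}\Psi|^2$; hence this sum is also bounded by $C\int_0^t \mathcal{E}_{[2,5]}(s)\, ds$ via the $(1+\Psi)^P$ weight inside the energy. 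The fourth sum (over $\{-1 < \Psi \leq -\tfrac{1}{2}\}$) carries the weight $(1+\Psi)^{2(P-1)}$ and is absorbed into the favorable spacetime integral on the left-hand side of \eqref{E:MAININTEGRALINEQUALITYFORENEGY}: on that set $0 < 1+\Psi \leq \tfrac{1}{2}$, so $(1+\Psi)^{2(P-1)} \leq (1+\Psi)^{P-1}$ for $P=1,2$, and the prefactor $C\varepsilon$ is smaller than $P\mathring{\updelta}_*/16$ once $\varepsilon$ is small relative to $\mathring{\updelta}_*$ per Subsect.\ \ref{SS:SMALLNESSASSUMPTIONS}. Grönwall's inequality on $[0,T_{(Boot)}) \subset [0, 2\mathring{\updelta}_*^{-1}]$ then delivers \eqref{E:MAINAPRIORIENERGYESTIMATE}, with a constant depending exponentially on $\mathring{\updelta}_*^{-1}$, consistent with Subsect.\ \ref{SS:CONVENTIONSFORCONSTANTS}.

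To improve the bootstrap assumptions, I would first upgrade the high-order pointwise bounds via Sobolev embedding $H^2(\mathbb{R}^3) \hookrightarrow L^\infty$: the energy bound gives $\|\nabla^{k'} \Psi\|_{L^2} + \|\partial_t \nabla^{k'} \Psi\|_{L^2} \leq C\mathring{\upepsilon}$ for $k' \in [2,5]$, whence $\|\nabla^{[2,3]} \Psi\|_{L^\infty} + \|\partial_t \nabla^{[2,3]} \Psi\|_{L^\infty} \leq C\mathring{\upepsilon}$. The bounds on $\nabla \Psi$ and $\partial_t \nabla \Psi$, which are not directly controlled by the high-order energy, I would recover through the fundamental theorem of calculus in time, integrating from $\Sigma_0$ where the data-size \eqref{E:DATASIZE} controls $\nabla \mathring{\Psi}$ and $\nabla \mathring{\Psi}_0$ by $\mathring{\upepsilon}$; the integrand $\partial_t^2 \nabla \Psi$ is rewritten via the $\nabla$-commuted wave equation as $\nabla[(1+\Psi)^P \Delta \Psi]$ and then bounded in terms of the already-controlled $\nabla^{[2,3]} \Psi$, together with the $L^\infty$ bound on $\Psi$ from \eqref{E:PSIITSELFBOOTSTRAP}. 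The bounds on $\partial_t^2 \nabla^{\leq 1} \Psi$ follow at once from equation \eqref{E:MODELWAVE} and its $\nabla$-commutation, which trade two $\partial_t$'s for spatial derivatives of $\Psi$ (again bounded by the high-order pointwise bounds just established). Finally, \eqref{E:PSIITSELFBOOTSTRAPIMPROVED}--\eqref{E:PARTIALTPSIBOOTSTRAPIMPROVED} are obtained by revisiting the proof of Lemma~\ref{L:POINTWISEFORPSIANDPARTIALTPSI} with the improved $L^\infty$ bound $|\partial_t^2 \Psi| = |(1+\Psi)^P \Delta \Psi| \leq C\mathring{\upepsilon}$, tightening the $\mathcal{O}(\varepsilon)$ remainders to $\mathcal{O}(\mathring{\upepsilon})$, then combining with $\|\mathring{\Psi}_0\|_{L^\infty} \leq \mathring{\updelta}$ and $t \leq 2\mathring{\updelta}_*^{-1}$.

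The main obstacle is the absorption step described above, where the error integral with weight $(1+\Psi)^{2(P-1)}$ must be swallowed by the favorable spacetime integral with weight $(1+\Psi)^{P-1}$ on the left-hand side of \eqref{E:MAININTEGRALINEQUALITYFORENEGY}. This is precisely the point at which the \emph{monotonicity} of $\partial_t \Psi$ near the degeneracy (inequality \eqref{E:PSISMALLIMPLIESPARTIALTPSINEGATIVE}) carries the argument: without it, the degenerate top-order weights would leave no margin to absorb \textbf{any} such error term. A secondary subtlety is bookkeeping the exponential Grönwall factor $\exp(C \mathring{\updelta}_*^{-1})$ produced by integrating on $[0, 2 \mathring{\updelta}_*^{-1}]$; this is permissible under the convention of Subsect.\ \ref{SS:CONVENTIONSFORCONSTANTS}, but it is the reason why $\mathring{\upepsilon}$ has to be taken small in a manner depending nonlinearly on $\mathring{\updelta}_*^{-1}$, as already foreshadowed in Subsubsect.\ \ref{SSS:SMALLDATAGLOBALEXISTENCE}.
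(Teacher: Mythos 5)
Your proposal is correct and follows essentially the same route as the paper: verify $\mathcal{E}_{[2,5]}(0) \leq C\mathring{\upepsilon}^2$ from \eqref{E:DATASIZE}, absorb the $C\varepsilon$-weighted spacetime error into the favorable integral (halving its coefficient from $\tfrac{P}{8}\mathring{\updelta}_*$ to $\tfrac{P}{16}\mathring{\updelta}_*$ via the smallness of $\varepsilon$ relative to $\mathring{\updelta}_*$), control the region $\{\Psi > -\tfrac{1}{2}\}$ by trading the weight $(1+\Psi)^{P-1}$ for the energy weight $(1+\Psi)^P$ using the lower bound $1+\Psi > \tfrac{1}{2}$, and close with Grönwall on $[0,2\mathring{\updelta}_*^{-1}]$; then derive the pointwise improvements from Sobolev embedding plus FTC-in-time together with the equation to bound $\partial_t^2\nabla^{\leq 1}\Psi$. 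The only stylistic difference is that you carry the weight-comparison step uniformly in $P\in\{1,2\}$, whereas the paper presents the details only for $P=1$ and asserts that $P=2$ is similar — your version makes the $P=2$ absorption explicit, which is a mild gain in completeness but not a genuinely different argument.
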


\begin{proof}
	We give the proof only for the case $P=1$ since the case $P=2$ can be handled using
	similar arguments. To obtain \eqref{E:MAINAPRIORIENERGYESTIMATE},
	we first note that for $\varepsilon$ sufficiently small
	relative to $\mathring{\updelta}_*$, we can absorb the last sum 
	on the right-hand side of \eqref{E:MAININTEGRALINEQUALITYFORENEGY}
	into the second term on the left-hand side, 
	which at most reduces its coefficient from 
	$\frac{1}{8} \mathring{\updelta}_*$
	to 
	$\frac{1}{16} \mathring{\updelta}_*$,
	as is stated on the left-hand side of \eqref{E:MAINAPRIORIENERGYESTIMATE}.
	Moreover, 
	since $\mathbf{1}_{\lbrace - \frac{1}{2} < \Psi  \rbrace} \leq C \mathbf{1}_{\lbrace - \frac{1}{2} < \Psi  \rbrace} (1 + \Psi)$,
	we have the bound
	$
	\int_{\Sigma_s}
			\mathbf{1}_{\lbrace - \frac{1}{2} < \Psi  \rbrace}
			|\nabla \nabla^{k'} \Psi|^2
		\, d \underline{x}
	\leq C \mathcal{E}_{[2,5]}(s)
	$
	for the terms in the next-to-last sum
	on the right-hand side of \eqref{E:MAININTEGRALINEQUALITYFORENEGY}.
	The remaining $\Sigma_s$ integrals are easily seen to be
	bounded in magnitude by
	$\leq C \mathcal{E}_{[2,5]}(s)$. 
	Also using the data bound
	$
	\mathcal{E}_{[2,5]}(0)
	\leq C \mathring{\upepsilon}^2
	$,
	which follows from our data assumptions \eqref{E:DATASIZE},
	we obtain
	\begin{align} \label{E:GRONWALLREADYENERGYESTIMATE}
	\mathcal{E}_{[2,5]}(t)
	& +
		\frac{1}{16}
		\mathring{\updelta}_*
		\sum_{k'=2}^5
		\int_{s=0}^t
		\int_{\Sigma_s}
			\mathbf{1}_{\lbrace -1 < \Psi \leq - \frac{1}{2} \rbrace}
			|\nabla \nabla^{k'} \Psi|^2
		\, d \underline{x}
		\, ds
			\\
		& \leq
		C \mathring{\upepsilon}^2
		+
		c
		\int_{s=0}^t
			\mathcal{E}_{[2,5]}(s)
		\, ds.
		\notag
	\end{align}
	From \eqref{E:GRONWALLREADYENERGYESTIMATE}, Gr\"{o}nwall's inequality,
	and \eqref{E:BOOTSTRAPTIME},
	we conclude
	\[
	\mathcal{E}_{[2,5]}(t)
		+
		\frac{1}{16}
		\mathring{\updelta}_*
		\sum_{k'=2}^5
		\int_{s=0}^t
		\int_{\Sigma_s}
			\mathbf{1}_{\lbrace -1 < \Psi \leq - \frac{1}{2} \rbrace}
			|\nabla \nabla^{k'} \Psi|^2
		\, d \underline{x}
		\, ds
		\leq C \exp(ct) \mathring{\upepsilon}^2
		\leq C \mathring{\upepsilon}^2,
	\]
	which is the desired bound \eqref{E:MAINAPRIORIENERGYESTIMATE}.
	
	The estimates \eqref{E:BOOTSTRAPIMPROVED} for 
	$\nabla^{[2,3]} \Psi$
	and
	$\partial_t \nabla^{[2,3]} \Psi$
	then follow from \eqref{E:MAINAPRIORIENERGYESTIMATE}
	and the Sobolev embedding result
	$
	H^2(\mathbb{R}^3)
	\hookrightarrow
	L^{\infty}(\mathbb{R}^3)
	$.
	Next, we take up to one $\nabla$ derivative
	of equation \eqref{E:MODELWAVE}
	and use the already obtained 
	$L^{\infty}$ estimates and the bootstrap assumptions
	to obtain the bound
	$
	\| \partial_t^2 \nabla^{\leq 1} \Psi \|_{L^{\infty}(\Sigma_t)}
	\leq 
	C \mathring{\upepsilon}
	$
  stated in \eqref{E:BOOTSTRAPIMPROVED}.
	Using this bound, the fundamental theorem of calculus,
	and the data assumptions
	$
	\| \nabla \mathring{\Psi}_0 \|_{L^{\infty}(\Sigma_0)}
	\leq \mathring{\upepsilon}
	$
	and
	$
	\| \mathring{\Psi}_0 \|_{L^{\infty}(\Sigma_0)}
	\leq \mathring{\updelta}
	$,
	we obtain the bounds
	$
	\| \partial_t \nabla \Psi \|_{L^{\infty}(\Sigma_t)}
	\leq 
	C \mathring{\upepsilon}
	$
	and 
	$
	\| \partial_t \Psi \|_{L^{\infty}(\Sigma_t)}
	\leq 
	\mathring{\updelta}
	+
	C \mathring{\upepsilon}
	$,
	which in particular yields \eqref{E:PARTIALTPSIBOOTSTRAPIMPROVED}.
	Using a similar argument based on the already obtained bound
	$\| \partial_t \nabla \Psi \|_{L^{\infty}(\Sigma_t)} \leq C \mathring{\upepsilon}$,
	we deduce
	$
	\| \nabla \Psi \|_{L^{\infty}(\Sigma_t)}
	\leq 
	C \mathring{\upepsilon}
	$.
	Similarly, from the
	already obtained bound
	$
	\| \partial_t \Psi \|_{L^{\infty}(\Sigma_t)}
	\leq 
	\mathring{\updelta}
	+
	C \mathring{\upepsilon}
	$,
	the fundamental theorem of calculus,
	the initial data bound
	$
	\| \mathring{\Psi}  \|_{L^{\infty}(\Sigma_0)}
	\leq 
	\mathring{\upepsilon}
	$,
	and the fact that $0 \leq t < T_{(Boot)} \leq 2 \mathring{\updelta}_*^{-1}$,
	we deduce 
	$
	\| \Psi \|_{L^{\infty}(\Sigma_t)}
	\leq 2 \mathring{\updelta}_*^{-1} \mathring{\updelta}	
	+ C \mathring{\upepsilon}
	$,
	that is, \eqref{E:PSIITSELFBOOTSTRAPIMPROVED}.
	
\end{proof}

\section{The main results}
\label{S:MAINRESULTS}
We now derive our main results, namely
Theorem~\ref{T:STABLEFINITETIMEBREAKDOWN}
and Prop.\ \ref{P:BLOWUPOFKRETSCHMANN}.

\begin{theorem}[\textbf{Stable finite-time degeneration of hyperbolicity}]
\label{T:STABLEFINITETIMEBREAKDOWN}
	Let $(\mathring{\Psi},\mathring{\Psi}_0) \in H^6(\mathbb{R}^3) \times H^5(\mathbb{R}^3)$
	be compactly supported initial data \eqref{E:MODELDATA} for the wave equation \eqref{E:MODELWAVE}
	with $P \in \lbrace 1,2 \rbrace$ and let $\Psi$ denote the corresponding solution.
	Let
	\begin{align} \label{E:ONEPLUSPSIMINSIGMATDEF}
		\mathcal{M}(t)
		& := \min_{(s,\underline{x}) \in [0,t] \times \mathbb{R}^3} 
		\left\lbrace 1 + \Psi(s,\underline{x}) \right\rbrace.
	\end{align}
	Let $\mathring{\upepsilon}$, 
	$\mathring{\updelta}$,
	and $\mathring{\updelta}_*$
	be the data-size parameters 
	from \eqref{E:DATASIZE}-\eqref{E:CRUCIALDATASIZEPARAMETER}
	and assume that
	$\mathring{\updelta} > 0$
	and $\mathring{\updelta}_* > 0$.
	Note that
	if $\mathring{\upepsilon}$ is sufficiently small,
	then $\mathcal{M}(0) = 1 + \mathcal{O}(\mathring{\upepsilon}) > 0$.
	If $\mathring{\upepsilon}$
	is sufficiently small relative to
	$\mathring{\updelta}^{-1}$
	and
	$\mathring{\updelta}_*$
	in the sense described in Subsect.\ \ref{SS:SMALLNESSASSUMPTIONS},
	then the following conclusions hold.
	
	\medskip
	
	\noindent \underline{\textbf{Breakdown in hyperbolicity precisely at time} $T_{\star}$}:
	There exists a $T_{\star} > 0$ satisfying
	\begin{align} \label{E:HYPERBOLICITYBREAKDOWNTIME}
		T_{\star}
		= \left\lbrace
				1 + \mathcal{O}(\mathring{\upepsilon})
			\right\rbrace
			\mathring{\updelta}_*^{-1}
	\end{align}
	such that the solution exists classically on the slab 
	$[0,T_{\star}) \times \mathbb{R}^3$
	and such that the following inequality holds for $0 \leq t < T_{\star}$:
	\begin{align} \label{E:POSITIVEMIN}
		\mathcal{M}(t) > 0.
	\end{align}
	Moreover,
	\begin{align} \label{E:HYPERBOLICITYBREAKDSDOWN}
		\lim_{t \uparrow T_{\star}} \mathcal{M}(t) = 0.
	\end{align}
	
	\medskip
	\noindent \underline{\textbf{Regularity properties on} $[0,T_{\star}) \times \mathbb{R}^3$}:
	On the slab
	$[0,T_{\star}) \times \mathbb{R}^3$,
	the solution satisfies the energy bounds
	\eqref{E:MAINAPRIORIENERGYESTIMATE},
	the $L^{\infty}$ estimates \eqref{E:PSIITSELFBOOTSTRAPIMPROVED}-\eqref{E:BOOTSTRAPIMPROVED},
	and the pointwise estimates
	\eqref{E:PSIWELLAPPROXIMATED}-\eqref{E:PARTIALTPSIWELLAPPROXIMATED}
	(with $C \mathring{\upepsilon}$ on the right-hand side in place of $\varepsilon$
	in the latter two estimates).
	Moreover,
	\begin{subequations}
	\begin{align}
		\Psi
		& \in 
		C\left([0,T_{\star}),H^6(\mathbb{R}^3) \right)
		\cap
		L^{\infty}\left([0,T_{\star}),H^5(\mathbb{R}^3) \right),
			\label{E:PSIREGULARITYBEFOREDEGENERACY} \\
		\partial_t \Psi
		& \in 
		C\left([0,T_{\star}),H^5(\mathbb{R}^3) \right)
		\cap
		L^{\infty}\left([0,T_{\star}),H^5(\mathbb{R}^3) \right).
		\label{E:PARTIALTPSIREGULARITYBEFOREDEGENERACY}
	\end{align}
	\end{subequations}
	
	\medskip
	\noindent \underline{\textbf{Regularity properties on} $[0,T_{\star}] \times \mathbb{R}^3$}:
	$\Psi$ extends to a classical solution on 
	the closed slab $[0,T_{\star}] \times \mathbb{R}^3$
	enjoying the following regularity properties:
	for any $N < 5$, we have
	\begin{subequations}
	\begin{align}
		\Psi
		& \in 
			C\left([0,T_{\star}],H^5(\mathbb{R}^3) \right),
			\label{E:PSIREGULARITYAFTERDEGENERACY} \\
		\partial_t \Psi
		& \in 
		C\left([0,T_{\star}],H^N(\mathbb{R}^3) \right)
		\cap
		L^{\infty}\left([0,T_{\star}],H^5(\mathbb{R}^3) \right).
		\label{E:PARTIALTPSIREGULARITYAFTERDEGENERACY}
	\end{align}
	\end{subequations}
	In particular, 
	the $L^{\infty}$ estimates \eqref{E:PSIITSELFBOOTSTRAPIMPROVED}-\eqref{E:BOOTSTRAPIMPROVED}
	and the pointwise estimates
	\eqref{E:PSIWELLAPPROXIMATED}-\eqref{E:PARTIALTPSIWELLAPPROXIMATED}
	(with $C \mathring{\upepsilon}$ on the right-hand side in place of $\varepsilon$ in these estimates)
	hold on $[0,T_{\star}] \times \mathbb{R}^3$. Moreover, in the case\footnote{In the proof of the theorem,
	we clarify why our proof of \eqref{E:L2INTEGRABILITYATTOPORDER} relies on the assumption $P=1$.} $P=1$,
	we have
	\begin{align} \label{E:L2INTEGRABILITYATTOPORDER}
		\Psi
		& \in 
			L^2\left([0,T_{\star}],H^6(\mathbb{R}^3) \right).
	\end{align}

	\medskip
	\noindent \underline{\textbf{Description of the breakdown along} $\Sigma_{T_{\star}}$}:
	The set
	\begin{align} \label{E:BREAKDOWNINHYPERBOLICITYSET}
		\Sigma_{T_{\star}}^{Degen}
		& := 
		\lbrace
			q \in \Sigma_{T_{\star}} \ | \ 1 + \Psi(q) = 0
		\rbrace
	\end{align}
	is non-empty and we have the estimate
	\begin{align} \label{E:PARTIALTPSINEGATIVEINSIGMATDEGENERACY}
	\sup_{\Sigma_{T_{\star}}^{Degeneracy}} \partial_t \Psi
	\leq 
		-
		\frac{1}{8}
		\mathring{\updelta}_*.
	\end{align}
	
	In particular, in the case $P=1$,
	the hyperbolicity of the wave equation breaks down
	on $\Sigma_{T_{\star}}^{Degen}$ in the following sense:
	if $q \in \Sigma_{T_{\star}}^{Degen}$,
	then any $C^1$ extension of $\Psi$
	to any spacetime neighborhood
	$\Omega_q$ containing $q$ necessarily contains points $p$ 
	such that equation \eqref{E:MODELWAVE} is elliptic at $\Psi(p)$.
	In contrast, in the case $P=2$, only the \underline{strict} hyperbolicity (in the sense of Footnote~\ref{FN:STRICTLYHYPERBOLIC})
	of equation \eqref{E:MODELWAVE} breaks down for the first time
	at $T_{\star}$.
	
\end{theorem}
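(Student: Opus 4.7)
The plan is to combine the a priori estimates of Proposition~\ref{P:APRIORIESTIMATES} with a standard continuation argument for quasilinear wave equations, then use the sharp pointwise descriptions of Lemma~\ref{L:POINTWISEFORPSIANDPARTIALTPSI} to locate the degeneracy time and analyze the solution at it. First, by standard local well-posedness for quasilinear wave equations in $H^6 \times H^5$, a classical solution exists on a nonempty time interval. Let $T_\star$ be the supremum of times $T \in (0, 2\mathring{\updelta}_*^{-1}]$ on which the solution exists classically and the bootstrap assumptions \eqref{E:HYPERBOLICBOOTSTRAP}--\eqref{E:SMALLLINFTYBOOTSTRAP} hold. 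Proposition~\ref{P:APRIORIESTIMATES} strictly improves the $L^\infty$ bootstrap assumptions (with $C\mathring{\upepsilon}$ in place of $\varepsilon^{1/2}$ or $\varepsilon$); combined with standard breakdown criteria for quasilinear equations, this means the solution can only fail to extend by reaching $T_\star = 2\mathring{\updelta}_*^{-1}$ or by saturating \eqref{E:HYPERBOLICBOOTSTRAP}, i.e.\ by $\mathcal{M}(T_\star) = 0$.

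To locate $T_\star$, I apply \eqref{E:PSIWELLAPPROXIMATED}: $\Psi(t,\underline{x}) = t\mathring{\Psi}_0(\underline{x}) + \mathcal{O}(\mathring{\upepsilon})$. Picking $\underline{x}_0$ with $[\mathring{\Psi}_0(\underline{x}_0)]_- = \mathring{\updelta}_*$ gives $1 + \Psi(t,\underline{x}_0) = 1 - t\mathring{\updelta}_* + \mathcal{O}(\mathring{\upepsilon})$, which vanishes for some $t = \mathring{\updelta}_*^{-1}(1 + \mathcal{O}(\mathring{\upepsilon})) < 2\mathring{\updelta}_*^{-1}$; the uniform lower bound $1 + \Psi(t,\underline{x}) \geq 1 - t\mathring{\updelta}_* - C\mathring{\upepsilon}$ ensures $\mathcal{M}(t) > 0$ for $t$ strictly smaller. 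This yields both \eqref{E:HYPERBOLICITYBREAKDOWNTIME}, \eqref{E:POSITIVEMIN}, and \eqref{E:HYPERBOLICITYBREAKDSDOWN}, and confirms that the a priori estimates from Sect.~\ref{S:APRIORIESTIMATES} apply on all of $[0, T_\star) \times \mathbb{R}^3$.

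For the regularity assertions, the summands $|\nabla^{k'} \Psi|^2$ and $|\partial_t \nabla^{k'} \Psi|^2$ in \eqref{E:MAINAPRIORIENERGYESTIMATE} directly give the $L^\infty_t H^5$ bounds in \eqref{E:PSIREGULARITYBEFOREDEGENERACY}--\eqref{E:PARTIALTPSIREGULARITYBEFOREDEGENERACY}. Continuity $\Psi \in C_t H^6$ on the open interval $[0, T_\star)$ follows because on any compact subinterval $(1+\Psi)^P$ is bounded below by a positive constant, so the degenerate weight in the energy is harmless and standard weak$*$/strong continuity arguments apply. In the case $P=1$, the friction integrand in \eqref{E:MAINAPRIORIENERGYESTIMATE} is precisely $|\nabla \nabla^{k'} \Psi|^2$, which together with the non-degenerate contribution on $\lbrace \Psi > -1/2\rbrace$ yields $\Psi \in L^2([0,T_\star],H^6)$, i.e.\ \eqref{E:L2INTEGRABILITYATTOPORDER}; this is exactly where $P=1$ is essential, since for $P=2$ the integrand carries a degenerate $(1+\Psi)$ weight. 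The extensions to $t = T_\star$ use that $\partial_t \Psi \in L^\infty_t H^5$ makes $t \mapsto \Psi(t)$ Lipschitz into $H^5$ (hence continuous up to $T_\star$), while $\partial_t \Psi \in C_t H^N$ for $N < 5$ follows from an Aubin--Lions-type interpolation using the equation \eqref{E:MODELWAVE} to bound $\partial_t^2 \Psi$ in a lower Sobolev norm.

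Finally, for the degeneracy set: by the finite-propagation-speed remark following \eqref{E:SMALLLINFTYBOOTSTRAP}, $\Psi(T_\star, \cdot)$ has compact support, so its continuous extension attains the value $-1$ on a nonempty compact subset of $\Sigma_{T_\star}$, giving $\Sigma_{T_\star}^{Degen} \neq \emptyset$. The bound \eqref{E:PARTIALTPSINEGATIVEINSIGMATDEGENERACY} is just \eqref{E:PSISMALLIMPLIESPARTIALTPSINEGATIVE} extended by continuity to $t = T_\star$. For $P=1$, any $C^1$ extension past $q \in \Sigma_{T_\star}^{Degen}$ has $\partial_t \Psi(q) \leq -\mathring{\updelta}_*/8 < 0$, so $\Psi(p) < -1$ at points $p$ slightly to the future of $q$; at such $p$ the principal coefficient $(1+\Psi)$ is strictly negative, making \eqref{E:MODELWAVE} elliptic. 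For $P=2$ the coefficient $(1+\Psi)^2 \geq 0$ always, so hyperbolicity persists, but strict hyperbolicity (Footnote~\ref{FN:STRICTLYHYPERBOLIC}) fails precisely at $\Sigma_{T_\star}^{Degen}$. The most delicate step will be the Aubin--Lions extension giving $\partial_t \Psi \in C_t H^N$ for every $N < 5$, since the top-order control is only through a degenerate weight and one cannot hope for $C_t H^5$ in general; this is why the theorem statement distinguishes the regularity of $\partial_t \Psi$ on the open and closed intervals.
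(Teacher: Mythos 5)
Your proposal is correct and follows essentially the same strategy as the paper's proof: a bootstrap/continuation argument with $T_\star$ defined as the supremum of bootstrap times, ruled down to the case $\mathcal{M}\to 0$ via the strict improvement in Prop.\ \ref{P:APRIORIESTIMATES}, the sharp pointwise formula \eqref{E:PSIWELLAPPROXIMATED} to locate $T_\star$, the friction integral in \eqref{E:MAINAPRIORIENERGYESTIMATE} to get $L^2_tH^6$ when $P=1$, the fundamental-theorem-of-calculus plus interpolation argument to extend $\partial_t\Psi$ continuously into $H^N$ for $N<5$, and the sign of $\partial_t\Psi$ on $\Sigma_{T_\star}^{Degen}$ to deduce the elliptic transition for $P=1$. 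The only cosmetic differences are that the paper phrases the $C_tH^N$ step as a direct Sobolev interpolation (not Aubin--Lions) after showing $\partial_t\Psi\in C_tH^3$ from the equation, and derives nonemptiness of $\Sigma_{T_\star}^{Degen}$ from $\lim_{t\uparrow T_\star}\mathcal{M}(t)=0$ together with continuity of the extended $\Psi$ rather than directly from compact support, but these are the same underlying ideas.
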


\begin{proof}
	Let $T_{\star}$ be the supremum of the set of times $T_{(Boot)}$
	subject to inequality \eqref{E:BOOTSTRAPTIME}
	and such that the solution exists classically on
	the slab $[0,T_{(Boot)}) \times \mathbb{R}^{3}$,
	has the same Sobolev regularity as the initial data,
	and satisfies the bootstrap assumptions of
	Subsect.\ \ref{SS:BOOTSTRAP}
	with $\varepsilon := C_* \mathring{\upepsilon}$,
	where $C_*$ is described just below.
	By standard local well-posedness,
	see for example, \cite{lH1997},
	if $\mathring{\upepsilon}$ is
	sufficiently small 
	and $C_* > 1$ is sufficiently large, 
	note that this is consistent with the assumed inequalities \eqref{E:DATAEPSILONVSBOOTSTRAPEPSILON},
	then $T_{\star} > 0$.
	Next, we state the following standard continuation result, 
	which can be proved, for example, by making straightforward modifications
	to the proof of \cite{lH1997}*{Theorem 6.4.11}:
	if $T_{\star} < 2 \mathring{\updelta}_*^{-1}$, then the solution can be classically continued to a slab of the form
	$[0,T_{\star} + \triangle] \times \mathbb{R}^{3}$ (for some $\triangle > 0$ with $T_{\star} + \triangle < 2 \mathring{\updelta}_*^{-1}$)
	on which the solution has the same Sobolev regularity as the initial data and on which the bootstrap assumptions hold,
	as long as 
	the bootstrap inequalities \eqref{E:PSIITSELFBOOTSTRAP}-\eqref{E:SMALLLINFTYBOOTSTRAP}
	are strictly satisfied for $t \in [0,T_{\star})$
	and
	$
	\inf_{t \in [0,T_{\star})} \mathcal{M}(t) 
	> 0
	$. 	
	It follows that either \textbf{i)} $T_{\star} = 2 \mathring{\updelta}_*^{-1}$,
	\textbf{ii)} that the bootstrap inequalities \eqref{E:PSIITSELFBOOTSTRAP}-\eqref{E:SMALLLINFTYBOOTSTRAP}
	are saturated at some time $t \in [0,T_{\star})$,
	or \textbf{iii)} 
	that
	$
	\inf_{t \in [0,T_{\star})} \mathcal{M}(t) 
	= 0
	$.
	If $C_*$ is chosen to be sufficiently large and $\mathring{\upepsilon}$ is chosen to be sufficiently small, 
	then the a priori estimates
	\eqref{E:PSIITSELFBOOTSTRAPIMPROVED}-\eqref{E:BOOTSTRAPIMPROVED}
	ensure that
	the bootstrap inequalities \eqref{E:PSIITSELFBOOTSTRAP}-\eqref{E:SMALLLINFTYBOOTSTRAP}
	are in fact strictly satisfied for $t \in [0,T_{\star})$.
	Moreover, from definition \eqref{E:CRUCIALDATASIZEPARAMETER}
	and the estimate \eqref{E:PSIWELLAPPROXIMATED}
	(which is now known to hold with $\varepsilon$ replaced by $C \mathring{\upepsilon}$),
	we see that
	$\min_{\Sigma_t} (1 + \Psi) 
	= 1 - \mathring{\updelta}_* t + \mathcal{O}(\mathring{\upepsilon})$
	and thus, in fact, case \textbf{iii)} occurs with
	$T_{\star} 
	=
	\mathring{\updelta}_*^{-1}
	+
	\mathcal{O}(\mathring{\upepsilon})
	= 
	\left\lbrace
		1 + \mathcal{O}(\mathring{\upepsilon})
	\right\rbrace
	\mathring{\updelta}_*^{-1}
	< 2 \mathring{\updelta}_*^{-1}
	$
	and
	$\lim_{t \uparrow T_{\star}} \mathcal{M}(t) = 0$.
	From the above reasoning, we easily deduce that 
	the energy bound \eqref{E:MAINAPRIORIENERGYESTIMATE} holds
	for $t \in [0,T_{\star})$
	and, since the a priori estimates
	\eqref{E:PSIITSELFBOOTSTRAPIMPROVED}-\eqref{E:BOOTSTRAPIMPROVED}
	show that the bootstrap assumptions hold with 
	$\varepsilon$ replaced by $C \mathring{\upepsilon}$,
	that the pointwise estimates
	\eqref{E:PSIWELLAPPROXIMATED}-\eqref{E:PARTIALTPSIWELLAPPROXIMATED}
	hold for $(t,\underline{x}) \in [0,T_{\star}) \times \mathbb{R}^{3}$
	with $\varepsilon$ replaced by $C \mathring{\upepsilon}$.
	
	In the rest of this proof,
	we sometimes silently use the simple facts that
	$\Psi, \partial_t \Psi \in L^{\infty}\left([0,T_{\star}),H^1(\mathbb{R}^3)\right)$.
	These facts do not follow from 
	the energy estimates \eqref{E:MAINAPRIORIENERGYESTIMATE}
	(since the energy does not directly control $\Psi$, $\partial_t \Psi$, $\nabla \Psi$, or $\nabla \partial_t \Psi$),
	but instead follow from
	\eqref{E:PSIITSELFBOOTSTRAPIMPROVED}-\eqref{E:BOOTSTRAPIMPROVED}
	and the compactly supported (in space) nature of the solution.
	To proceed, we easily conclude from the definition of $\mathcal{E}_{[2,5]}(t)$
	and the fact that the estimates \eqref{E:MAINAPRIORIENERGYESTIMATE} and \eqref{E:PARTIALTPSIBOOTSTRAPIMPROVED}-\eqref{E:BOOTSTRAPIMPROVED}
	hold on $[0,T_{\star})$ that
	$
	\partial_t \Psi
	\in 
	L^{\infty}\left([0,T_{\star}],H^5(\mathbb{R}^3) \right),
	$
	as is stated in \eqref{E:PARTIALTPSIREGULARITYAFTERDEGENERACY}.
	Also, this fact trivially implies the corresponding statement
	in \eqref{E:PARTIALTPSIREGULARITYBEFOREDEGENERACY},
	where the closed time interval is replaced with $[0,T_{\star})$.
	The facts that
	$\Psi
		\in 
		C\left([0,T_{\star}),H^6(\mathbb{R}^3) \right)
	$
	and 
	$
	\partial_t \Psi
	\in 
		C\left([0,T_{\star}),H^5(\mathbb{R}^3) \right)
	$,
	as is stated in \eqref{E:PSIREGULARITYBEFOREDEGENERACY} and \eqref{E:PARTIALTPSIREGULARITYBEFOREDEGENERACY},
	are standard results that can be proved using 
	energy estimates and simple facts from functional analysis.
	We omit the details and instead refer the reader to 
	\cite{jS2008c}*{Section 2.7.5}. We note that in proving these ``soft'' facts,
	it is important that $\mathcal{M}(t) > 0$ for $t \in [0,T_{\star})$,
	which implies that standard techniques for
	strictly hyperbolic equations can be used.
	To obtain the conclusion 
	$\Psi
		\in 
		L^2\left([0,T_{\star}],H^6(\mathbb{R}^3) \right)
		$
	in the case $P=1$,
	as is stated in \eqref{E:L2INTEGRABILITYATTOPORDER},
	we simply use the fact that the energy bounds \eqref{E:MAINAPRIORIENERGYESTIMATE} 
	hold on $[0,T_{\star})$
	(including the bound for the spacetime integral term on the left-hand side). 
	Note that the same argument does not apply in the case $P=2$ since in this case, the spacetime integral
	on the left-hand side of \eqref{E:MAINAPRIORIENERGYESTIMATE} features the degenerate weight $1 + \Psi$.
	The fact that
	$\Psi
		\in 
		C\left([0,T_{\star}],H^5(\mathbb{R}^3) \right)
	$,
	as is stated in \eqref{E:PSIREGULARITYAFTERDEGENERACY},
	is a simple consequence of the fundamental theorem of calculus,
	the fact that $\mathring{\Psi} \in H^6(\Sigma_0)$,
	and the already proven fact that
	$\partial_t \Psi
		\in 
		L^{\infty}\left([0,T_{\star}],H^5(\mathbb{R}^3) \right)
	$.
	To obtain that for $N < 5$, we have
	$\partial_t \Psi
		\in 
			C\left([0,T_{\star}],H^N(\mathbb{R}^3) \right)
	$,
	as is stated in \eqref{E:PARTIALTPSIREGULARITYAFTERDEGENERACY},
	we first use equation \eqref{E:MODELWAVE},
	the	fact that
	$
	\Psi
		\in 
		C\left([0,T_{\star}],H^5(\mathbb{R}^3) \right)
	$,
	and the standard Sobolev calculus
	to obtain
	$\partial_t^2 \Psi
		\in 
			C\left([0,T_{\star}],H^3(\mathbb{R}^3) \right)
	$.
	Hence, 
	from the fundamental theorem of calculus
	and the fact that $\mathring{\Psi}_0 \in H^5(\Sigma_0)$,
	we obtain
	$\partial_t \Psi
		\in 
			C\left([0,T_{\star}],H^3(\mathbb{R}^3) \right)
	$.
	From this fact and the fact
	$\partial_t \Psi
		\in 
		L^{\infty}\left([0,T_{\star}],H^5(\mathbb{R}^3) \right)
	$,
	we obtain, by interpolating\footnote{Here, we mean the following standard inequality: 
	if $f \in H^5(\Sigma_t)$ and $0 \leq N \leq 5$, 
	then there exists a constant $C_N > 0$ such that
	$\| f \|_{H^N(\Sigma_t)} \leq C_N \| f \|_{L^2(\Sigma_t)}^{1-N/5} \| f \|_{H^5(\Sigma_t)}^{N/5}$. \label{FN:INTERPOLATION}} 
	between $L^2$ and $H^5$,
	the desired conclusion
	$
	\partial_t \Psi
		\in 
			C\left([0,T_{\star}],H^N(\mathbb{R}^3) \right)
	$.
	
	Next, we note that 
	the arguments given in the first paragraph of this proof imply that 
	$\mathcal{M}$ extends as a continuous decreasing function defined 
	for $t \in [0,T_{\star}]$ such that $\mathcal{M}(t) > 0$ for $t \in [0,T_{\star})$
	and such that $\mathcal{M}(T_{\star}) = 0$.
	Also using that
	$
	\Psi
		\in 
		C\left([0,T_{\star}],H^5(\mathbb{R}^3) \right)
		\subset
		C\left([0,T_{\star}],C^3(\mathbb{R}^3) \right)
	$,
	we deduce, in view of
	definitions \eqref{E:ONEPLUSPSIMINSIGMATDEF}
	and
	\eqref{E:BREAKDOWNINHYPERBOLICITYSET},
	that
	$\Sigma_{T_{\star}}^{Degen}$
	is non-empty.
	Moreover, from \eqref{E:PSISMALLIMPLIESPARTIALTPSINEGATIVE}
	and the fact that
	$
	\partial_t 
		\Psi
		\in 
		C\left([0,T_{\star}],H^{4.9}(\mathbb{R}^3) \right)
		\subset
		C\left([0,T_{\star}],C^3(\mathbb{R}^3) \right)
	$,
	we find that the estimate \eqref{E:PARTIALTPSINEGATIVEINSIGMATDEGENERACY} 
	holds.
	In addition, in view of \eqref{E:PARTIALTPSINEGATIVEINSIGMATDEGENERACY},
	we see that in the case $P=1$, 
	if $q \in \Sigma_{T_{\star}}^{Degen}$, 
	then any $C^1$ extension of $\Psi$ to a neighborhood
	of $q$ contains points $p$ such that $1 + \Psi(p) < 0$,
	which renders equation \eqref{E:MODELWAVE} elliptic.
	This is in contrast to the case $P=2$ 
	in the sense that equation \eqref{E:MODELWAVE} is hyperbolic
	for all values of $\Psi$.
	
	\end{proof}

Theorem~\ref{T:STABLEFINITETIMEBREAKDOWN} yields that $\Psi$ remains regular, 
all the way up to the time $T_{\star}$.
However, as the next proposition shows,
a type of invariant blowup does in fact occur at time $T_{\star}$ in both 
the cases $P=1,2$.
The blowup is tied to the Riemann curvature of the metric $g$.

\begin{proposition}[\textbf{Blowup of the Kretschmann scalar}]
	\label{P:BLOWUPOFKRETSCHMANN}
	Let $g = g(\Psi)$ denote the spacetime metric defined in \eqref{E:SPACETIMEMETRIC}
	and let $Riem(g)$ denote the Riemann curvature tensor\footnote{Our sign convention for 
	curvature is
	$
		D_{\alpha} D_{\beta} X_{\mu} 
		- 
		D_{\beta} D_{\alpha} X_{\mu} 	
		= Riem(g)_{\alpha \beta \mu \nu}X^{\nu}
	$,
	where $D$ denotes the Levi--Civita connection of $g$
	and $X$ is an arbitrary smooth vector field.
	} 
	of $g$.
	Under the assumptions and conclusions of Theorem~\ref{T:STABLEFINITETIMEBREAKDOWN},
	we have the following estimate for the Kretschmann scalar
	$Riem(g)^{\alpha \beta \gamma \delta} Riem(g)_{\alpha \beta \gamma \delta}$
	on $[0,T_{\star}] \times \mathbb{R}^3$:
	\begin{subequations}
	\begin{align} \label{E:BLOWUPOFKRETSCHMANN}
		Riem(g)^{\alpha \beta \gamma \delta} Riem(g)_{\alpha \beta \gamma \delta}
		& = \frac{15}{2} \frac{(\partial_t \Psi)^4}{(1 + \Psi)^4}
			+ \mathcal{O}
				\left(\frac{\mathring{\upepsilon}}{(1 + \Psi)^3}
				\right),
		&& (P=1),
				\\
	Riem(g)^{\alpha \beta \gamma \delta} Riem(g)_{\alpha \beta \gamma \delta}
	& = 60 \frac{(\partial_t \Psi)^4}{(1 + \Psi)^4}
			+ \mathcal{O}
				\left(\frac{\mathring{\upepsilon}}{(1 + \Psi)^3}
				\right),
	&& (P=2).
	\label{E:PISTWOBLOWUPOFKRETSCHMANN}
\end{align}
\end{subequations}
In particular, 
$Riem(g)^{\alpha \beta \gamma \delta} Riem(g)_{\alpha \beta \gamma \delta}$
is bounded for $0 \leq t < T_{\star}$,
while by \eqref{E:PSISMALLIMPLIESPARTIALTPSINEGATIVE}
and
\eqref{E:BLOWUPOFKRETSCHMANN}-\eqref{E:PISTWOBLOWUPOFKRETSCHMANN},
at time $T_{\star}$,
$Riem(g)^{\alpha \beta \gamma \delta} Riem(g)_{\alpha \beta \gamma \delta}$
blows up precisely on the subset
$\Sigma_{T_{\star}}^{Degen}$
defined in \eqref{E:BREAKDOWNINHYPERBOLICITYSET}.
\end{proposition}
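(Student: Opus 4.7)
\bigskip

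\noindent \textbf{Proof proposal.}
The plan is to compute $Riem(g)_{\alpha\beta\gamma\delta} Riem(g)^{\alpha\beta\gamma\delta}$ by direct calculation in rectangular coordinates, treating the metric \eqref{E:SPACETIMEMETRIC} as a perturbation of a spatially homogeneous FLRW-type metric with scale factor $a(t,\underline{x}) := (1+\Psi)^{-P/2}$, and then extracting the leading behavior as $1+\Psi \to 0$ using the a priori estimates from Theorem~\ref{T:STABLEFINITETIMEBREAKDOWN}. The central observation is that the estimates
$\|\nabla^{[1,3]}\Psi\|_{L^\infty(\Sigma_t)} \lesssim \mathring{\upepsilon}$ and
$\|\partial_t^2\nabla^{\le 1}\Psi\|_{L^\infty(\Sigma_t)} \lesssim \mathring{\upepsilon}$ from
\eqref{E:BOOTSTRAPIMPROVED} force all contributions involving a spatial derivative of $\Psi$ or a factor of $\partial_t^2 \Psi$ to be subordinate to the pure $(\partial_t\Psi)^4/(1+\Psi)^4$ term.

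First I would compute the Christoffel symbols. Setting $F := (1+\Psi)^{-P}$, the non-zero Christoffels are
\begin{align*}
\Gamma^0_{ij} &= \tfrac{1}{2}(\partial_t F)\delta_{ij} = -\tfrac{P}{2}(1+\Psi)^{-P-1}(\partial_t\Psi)\delta_{ij}, \\
\Gamma^i_{0j} &= -\tfrac{P}{2}\frac{\partial_t\Psi}{1+\Psi}\delta^i_j,
\end{align*}
together with $\Gamma^0_{0i}, \Gamma^0_{ij}\big|_{\nabla\Psi}, \Gamma^i_{jk}$ which involve spatial derivatives of $\Psi$ and are therefore $\mathcal{O}(\mathring{\upepsilon}/(1+\Psi))$ (after multiplication by the appropriate power of $g^{-1}$). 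Next I would compute the Riemann components $R^\mu{}_{\nu\alpha\beta}$ via the standard formula. The key component is
\[
R^i{}_{0j0} = -\partial_t \Gamma^i_{0j} - \Gamma^i_{0k}\Gamma^k_{j0}
 = \tfrac{P}{2}\frac{\partial_t^2\Psi}{1+\Psi}\delta^i_j - \Big(\tfrac{P}{2}+\tfrac{P^2}{4}\Big)\frac{(\partial_t\Psi)^2}{(1+\Psi)^2}\delta^i_j,
\]
while the purely spatial components $R^i{}_{jkl}$ are either proportional to $(\partial_t\Psi)^2/(1+\Psi)^2$ (through the $\Gamma^0_{\ast\ast}\Gamma^{\ast}_{\ast\ast}$ products with mixed time-space entries) or else controlled by $\mathring{\upepsilon}$ (through terms containing a spatial derivative of $\Psi$).

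The cleanest way to assemble these into $Riem_{\alpha\beta\gamma\delta}Riem^{\alpha\beta\gamma\delta}$ is to work in the orthonormal frame $(\partial_t, (1+\Psi)^{P/2}\partial_a)_{a=1,2,3}$, in which the leading parts of the only non-vanishing sectional curvatures are
\[
\widehat{R}_{0i0i} = -\frac{a''}{a} = \tfrac{P}{2}\frac{\partial_t^2\Psi}{1+\Psi} - \Big(\tfrac{P}{2}+\tfrac{P^2}{4}\Big)\frac{(\partial_t\Psi)^2}{(1+\Psi)^2},
\qquad
\widehat{R}_{ijij} = \Big(\frac{a'}{a}\Big)^2 = \frac{P^2}{4}\frac{(\partial_t\Psi)^2}{(1+\Psi)^2},
\]
modulo terms that are $\mathcal{O}(\mathring{\upepsilon}/(1+\Psi))$. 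Since the Kretschmann scalar of a metric $-dt^2 + a^2\delta_{ij}dx^idx^j$ equals $12[(a''/a)^2 + (a'/a)^4]$, the leading part of the Kretschmann scalar is
\[
12\Big[\Big(\tfrac{P}{2}+\tfrac{P^2}{4}\Big)^2 + \tfrac{P^4}{16}\Big]\frac{(\partial_t\Psi)^4}{(1+\Psi)^4},
\]
which evaluates to $\tfrac{15}{2}$ when $P=1$ and $60$ when $P=2$, matching \eqref{E:BLOWUPOFKRETSCHMANN}--\eqref{E:PISTWOBLOWUPOFKRETSCHMANN}.

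The main obstacle is the bookkeeping of the error terms: one must verify that every contribution to $Riem \cdot Riem$ coming either from spatial derivatives of $\Psi$ or from the $\partial_t^2\Psi/(1+\Psi)$ piece of $R^i{}_{0j0}$ is of order $\mathcal{O}(\mathring{\upepsilon}/(1+\Psi)^3)$ as claimed. This is straightforward: each such contribution carries at least one factor that is $\mathcal{O}(\mathring{\upepsilon})$ (either a spatial derivative of $\Psi$ or $\partial_t^2\Psi$), whereas $\partial_t\Psi$ and the combination $(\partial_t\Psi)^2/(1+\Psi)$ are $\mathcal{O}(1)$ by \eqref{E:PARTIALTPSIBOOTSTRAPIMPROVED}; a factor of $(1+\Psi)^{-1}$ at most three times can arise (since pairing with the leading term in Kretschmann gives cross terms of type $(\partial_t\Psi)^2/(1+\Psi)^2 \cdot \mathring{\upepsilon}/(1+\Psi)$). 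Finally, the blowup statement follows from \eqref{E:PARTIALTPSINEGATIVEINSIGMATDEGENERACY} on $\Sigma_{T_\star}^{Degen}$ together with the continuity of $\partial_t\Psi$ guaranteed by \eqref{E:PARTIALTPSIREGULARITYAFTERDEGENERACY}, which ensures that $(\partial_t\Psi)^4$ stays uniformly positive near $\Sigma_{T_\star}^{Degen}$, while $(1+\Psi)^{-4}$ blows up precisely there.
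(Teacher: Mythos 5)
Your proposal is correct and reaches the stated coefficients $\tfrac{15}{2}$ (for $P=1$) and $60$ (for $P=2$) by a route that is genuinely different from the paper's. The paper works with the mixed-index decomposition \eqref{E:SIMPLECURVATUREDECOMPOSITION} and the Gauss--Codazzi identities relative to the foliation by $\Sigma_t$, expressing $Riem(g)_{ab}^{\ \ cd}$, $Riem(g)_{a0}^{\ \ c0}$, $Riem(g)_{0b}^{\ \ cd}$ in terms of the second fundamental form $k^i_{\ j}$ and error tensors $\triangle$, and then deriving the bounds \eqref{E:ESTIMATERFOURFOURERROR}--\eqref{E:ESTIMATERFOURTHREEERROR} component by component. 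You instead pass to the orthonormal frame, observe that the metric is an FLRW-type metric $-dt^2 + a^2 \delta_{ij}dx^i dx^j$ with $a = (1+\Psi)^{-P/2}$ up to spatially varying corrections, and invoke the closed-form Kretschmann expression $12\left[(a''/a)^2 + (a'/a)^4\right]$. Your identifications $\dot a/a = -\tfrac{P}{2}\tfrac{\partial_t\Psi}{1+\Psi}$ and the leading part of $\ddot a/a$ are correct, and the arithmetic $12\bigl[(\tfrac{P}{2}+\tfrac{P^2}{4})^2 + \tfrac{P^4}{16}\bigr]$ does produce $\tfrac{15}{2}$ and $60$. This is a pleasant shortcut and buys you the leading coefficient essentially for free; the paper's Gauss--Codazzi route buys a cleaner bookkeeping of the subleading terms, isolating the Codazzi contribution $Riem(g)_{0b}^{\ \ cd}$ which has no FLRW analogue but must be shown to be harmless.

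One flaw in the error paragraph: you assert that ``the combination $(\partial_t\Psi)^2/(1+\Psi)$ is $\mathcal{O}(1)$,'' which is false — near $\Sigma_{T_\star}^{Degen}$ the quantity $\partial_t\Psi$ is bounded away from zero by \eqref{E:PARTIALTPSINEGATIVEINSIGMATDEGENERACY} while $1+\Psi\to 0$, so $(\partial_t\Psi)^2/(1+\Psi)$ blows up. The error-counting you actually need (and which the paper carries out explicitly) is: the leading orthonormal-frame components $\hat R_{0i0i}$ and $\hat R_{ijij}$ are $\mathcal{O}\bigl((1+\Psi)^{-2}\bigr)$, while every subleading contribution — intrinsic curvature of $\underline g$, Codazzi terms, the $\partial_t^2\Psi/(1+\Psi)$ piece of $\hat R_{0i0i}$ — carries an $\mathcal{O}(\mathring\upepsilon)$ factor and costs at most one power of $(1+\Psi)^{-1}$, so the cross terms in the square are $\mathcal{O}\bigl(\mathring\upepsilon (1+\Psi)^{-3}\bigr)$ and the pure-error squares are even better. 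If you restate the argument that way the paragraph goes through, but as written the intermediate claim is incorrect and would not survive scrutiny.
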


\begin{proof}
We prove only \eqref{E:BLOWUPOFKRETSCHMANN} since the proof of \eqref{E:PISTWOBLOWUPOFKRETSCHMANN}
is similar. The identities that we state in this proof rely on the form of the metric \eqref{E:SPACETIMEMETRIC}.
We first note the following simple decomposition formula,
which relies on the standard symmetry and anti-symmetry properties
of the Riemann curvature tensor:
\begin{align}
Riem(g)^{\alpha \beta \gamma \delta} Riem(g)_{\alpha \beta \gamma \delta}
& = Riem(g)_{ab}^{\ \ cd} Riem(g)_{cd}^{\ \ ab}
	+ 4 Riem(g)_{a0}^{\ \ c0} Riem(g)_{c0}^{\ \ a0}
		\label{E:SIMPLECURVATUREDECOMPOSITION} 
		\\
& \ \
	- 4 g_{cc'}g_{dd'}g^{bb'}Riem(g)_{0b}^{\ \ cd} Riem(g)_{0b'}^{\ \ c'd'}.
	\notag
\end{align}
Next, we let 
$\underline{g}$ denote the first fundamental form of $\Sigma_t$ relative to $g$,
that is, 
$
\underline{g}_{ij} 
= 
g_{ij}
= (1 + \Psi)^{-P} \delta_{ij}
$
for $i,j=1,2,3$, where $\delta_{ij}$ denotes the standard Kronecker delta. 
We also let (recalling that $P=1$ in the present context)
\begin{align} \label{E:SECFUN}
		k_{\ j}^i
		& := - (\underline{g}^{-1})^{ia} 
			\left(
				\frac{1}{2} \mathcal{L}_{\partial_t} \underline{g}_{aj}
			\right)
		= \frac{1}{2} 
			\left\lbrace
				\partial_t \ln (1 + \Psi) 
			\right\rbrace	
			\delta_{\ j}^i
\end{align}
denote the (type $\binom{1}{1}$)
second fundamental form of $\Sigma_t$ relative to $g$,
where $\mathcal{L}_{\partial_t}$ denotes Lie differentiation
with respect to the vector field $\partial_t$
and $\delta_{\ j}^i$ denotes the standard Kronecker delta.
Standard calculations based on the Gauss and Codazzi equations 
for the Lorentzian manifold $(\mathbb{R}^{1+3},g)$
yield, see for example, \cite{iRjS2014b}*{Appendix A},
the identities
\begin{align}
		Riem(g)_{ab}^{\ \ cd} 
		& =
			k_{\ a}^c k_{\ b}^d
			- k_{\ a}^d k_{\ b}^c
			+ \triangle_{ab}^{\ \ cd},
			\label{E:RFOURALLSPATIALDECOMP} \\
		Riem(g)_{a0}^{\ \ c 0} 
		& = 
			(\partial_t \ln(1 + \Psi)) k_{\ a}^c
			+ k_{\ e}^c k_{\ a}^e
			+ \triangle_{a0}^{\ \ c0},
			\label{E:RFOURTWO0DECOMP} \\
		Riem(g)_{0b}^{\ \ cd} & = \triangle_{0b}^{\ \ cd},
		\label{E:RFOURONE0DECOMP}
	\end{align}
where the error terms are
\begin{align}
		\triangle_{ab}^{\ \ cd} & := Riem(\underline{g})_{ab}^{\ \ cd},
		 \label{E:RFOURFOURERROR} \\
		\triangle_{a0}^{\ \ c0} & 
			:= - \frac{1}{1 + \Psi} \partial_t ((1 + \Psi) k_{\ a}^c),
			\label{E:RFOURFOURTWOERROR} \\
		\triangle_{0b}^{\ \ cd} & := 
					(\underline{g}^{-1})^{ce} \partial_e (k_{\ b}^d)
				- (\underline{g}^{-1})^{de} \partial_e (k_{\ b}^c)
				\label{E:RFOURTHREEERROR} 
				\\
		& \ \ + (\underline{g}^{-1})^{ce} \Gamma_{e \ f}^{\ d} k_{\ b}^f
			- (\underline{g}^{-1})^{ce} \Gamma_{e \ b}^{\ f} k_{\ f}^d
			- (\underline{g}^{-1})^{de} \Gamma_{e \ f}^{\ c} k_{\ b}^f
			+ (\underline{g}^{-1})^{de} \Gamma_{e \ b}^{\ f} k_{\ f}^c,
			\notag
	\end{align}
	and
	\begin{align} \label{E:CHRISTOFFELSYMBOLSFIRSTFUND}
		\Gamma_{j \ k}^{\ i}
		& := 
		\frac{1}{2}
		(\underline{g}^{-1})^{ai}
		\left\lbrace
			\partial_j \underline{g}_{ak}
			+
			\partial_k \underline{g}_{ja}
			-
			\partial_a \underline{g}_{jk}
		\right\rbrace 
	\end{align}
are the Christoffel symbols\footnote{Our index conventions for the Christoffel symbols
are different than the ones used in many works on differential geometry.} 
of $\underline{g}$.
In \eqref{E:RFOURFOURERROR}, $Riem(\underline{g})$ denotes the Riemann curvature tensor of $\underline{g}$.
We note that in deriving \eqref{E:RFOURTWO0DECOMP}
and \eqref{E:RFOURFOURTWOERROR}, we have used the simple identity
\[
- \partial_t (k_{\ a}^c)
=
(\partial_t \ln(1 + \Psi)) k_{\ a}^c
- \frac{1}{1 + \Psi} \partial_t ((1 + \Psi) k_{\ a}^c).
\]

We will use the estimates of Theorem~\ref{T:STABLEFINITETIMEBREAKDOWN} 
to show that
\begin{align}
		\triangle_{ab}^{\ \ cd} 
		& := \mathcal{O}(\mathring{\upepsilon}) \frac{1}{1 + \Psi},
		 \label{E:ESTIMATERFOURFOURERROR} 
		\\
		\triangle_{a0}^{\ \ c0} & 
			:= \mathcal{O}(\mathring{\upepsilon}),
			\label{E:ESTIMATERFOURFOURTWOERROR} 
			\\
		\triangle_{0b}^{\ \ cd} 
		& := \mathcal{O}(\mathring{\upepsilon}) \frac{1}{1 + \Psi}.
				\label{E:ESTIMATERFOURTHREEERROR}
	\end{align}
The desired bound \eqref{E:BLOWUPOFKRETSCHMANN} then follows from 
\eqref{E:SIMPLECURVATUREDECOMPOSITION}, \eqref{E:SECFUN}, \eqref{E:RFOURALLSPATIALDECOMP}-\eqref{E:RFOURONE0DECOMP},
\eqref{E:ESTIMATERFOURFOURERROR}-\eqref{E:ESTIMATERFOURTHREEERROR},
the simple estimates $g^{ij} = \mathcal{O}(1) (1 + \Psi)$ and $g_{ij} = \mathcal{O}(1) (1 + \Psi)^{-1}$,
and straightforward calculations.

It remains for us to prove
\eqref{E:ESTIMATERFOURFOURERROR}-\eqref{E:ESTIMATERFOURTHREEERROR}.
To prove \eqref{E:ESTIMATERFOURFOURERROR}, 
we first use \eqref{E:CHRISTOFFELSYMBOLSFIRSTFUND}
and \eqref{E:PSIITSELFBOOTSTRAPIMPROVED}-\eqref{E:BOOTSTRAPIMPROVED}
to deduce
\begin{align} \label{E:CHIRSTSYMBOLEST}
	\Gamma_{j \ k}^{\ i}
	& = \mathcal{O}(\mathring{\upepsilon}) \frac{1}{1 + \Psi},
	& & 
	\partial_l \Gamma_{j \ k}^{\ i}
	= \mathcal{O}(\mathring{\upepsilon}) \frac{1}{(1 + \Psi)^2}.
\end{align}
Since  $Riem(\underline{g})_{ab}^{\ \ cd}$
has the schematic structure
$Riem(\underline{g})_{ab}^{\ \ cd}
= \underline{g}^{-1} \underline{\partial} \Gamma
	+ \underline{g}^{-1} \Gamma \cdot \Gamma
$
(where $\underline{\partial}$ denotes the gradient with respect to the spatial coordinates),
we deduce from \eqref{E:CHIRSTSYMBOLEST} 
and the simple estimate 
$(\underline{g}^{-1})^{ij} = \mathcal{O}(1) (1 + \Psi)$
that
\[
Riem(\underline{g})_{ab}^{\ \ cd}
= \mathcal{O}(\mathring{\upepsilon}) \frac{1}{1 + \Psi},
\]
which yields \eqref{E:ESTIMATERFOURFOURERROR}.
To prove \eqref{E:ESTIMATERFOURFOURTWOERROR},
we first use equation 
\eqref{E:SECFUN}, equation \eqref{E:MODELWAVE},
and the estimates \eqref{E:PSIITSELFBOOTSTRAPIMPROVED} and \eqref{E:BOOTSTRAPIMPROVED}
to deduce 
$
\displaystyle
\partial_t ((1 + \Psi) k_{\ a}^c)
= \frac{1}{2} \partial_t^2 \Psi \delta_{\ a}^c
= \frac{1}{2} (1 + \Psi) \Delta \Psi \delta_{\ a}^c
= \mathcal{O}(\mathring{\upepsilon}) (1 + \Psi)
$.
From this bound and \eqref{E:RFOURFOURTWOERROR},
we conclude \eqref{E:ESTIMATERFOURFOURTWOERROR}.
To prove \eqref{E:ESTIMATERFOURTHREEERROR},
we first use equation \eqref{E:SECFUN}
and the estimates \eqref{E:PSIITSELFBOOTSTRAPIMPROVED}-\eqref{E:BOOTSTRAPIMPROVED}
to deduce
\begin{align} \label{E:SEDONDFUNDEST}
	k_{\ j}^i
	& = \mathcal{O}(1) \frac{1}{1 + \Psi},
	&& 
	\partial_l k_{\ j}^i
	= \mathcal{O}(\mathring{\upepsilon}) \frac{1}{(1 + \Psi)^2}.
\end{align}
From 
\eqref{E:CHIRSTSYMBOLEST},
\eqref{E:SEDONDFUNDEST},
and the simple estimate 
$(\underline{g}^{-1})^{ij} = \mathcal{O}(1) (1 + \Psi)$, we conclude \eqref{E:ESTIMATERFOURTHREEERROR}.

\end{proof}

\section*{Acknowledgments}
Speck would like to thank Yuusuke Sugiyama 
for bringing the references  
\cites{kKyS2013,yS2013,sY2016a,sY2016b}
to his attention,
to Michael Dreher for pointing out
the references \cites{rM1996,rM1999},
to Willie Wong 
for pointing out the relevance of his work \cite{wW2016},
and to an anonymous referee for pointing out the work \cite{nLtNbT2015}.

\bibliographystyle{amsalpha}
\bibliography{JBib}

\end{document}